\documentclass[12pt,draftcls,onecolumn,letter]{IEEEtran}
\IEEEoverridecommandlockouts
%\overrideIEEEmargins

\usepackage{lineno}
\usepackage{graphicx}
\usepackage{epsfig}
\usepackage{subfig}
\usepackage{times}
\usepackage{amsmath}
\usepackage{amssymb}
\usepackage{hyperref}

\usepackage{cite}
\usepackage{color}

\newtheorem{remark}{Remark}[section]
\newtheorem{lem}{Lemma}[section]
\newtheorem{theorem}{Theorem}[section]

\newtheorem{assumption}{Assumption}[section]

\newtheorem{prob}{Problem}[section]

%\pagewiselinenumbers

\title{Power Generation and Distribution via Distributed Coordination Control}

\author{\small Byeong-Yeon Kim$^\dag$, Kwang-Kyo Oh$^\dag$, and Hyo-Sung Ahn$^\dag$
\thanks{\small $^\dag$School of Mechatronics, Gwangju Institute of Science and Technology, Gwangju, South Korea.
{E-mail: hyosung@gist.ac.kr}}}

\begin{document}

\maketitle
%\thispagestyle{empty}
%\pagestyle{empty}

% \tableofcontents

%%%%%%%%%%%%%%%%%%%%%%%%%%%%%%%%%%%%%%%%%%%%%%%%%%%%%%%%%%%%%%%%%%%%%%%%%%%%%%%%
\begin{abstract}
This paper presents power coordination, power generation, and power flow control
schemes for supply-demand balance in distributed grid networks.
Consensus schemes using only local information are employed to
generate power coordination, power generation and power flow control signals. For
the supply-demand balance, it is required to determine the amount
of power needed at each distributed power node. Also due to the
different power generation capacities of each power
node, coordination of power flows among distributed power resources
is essentially required. Thus, this paper proposes a decentralized power
coordination scheme, a power generation, and a power flow control method considering
these constraints based on distributed consensus algorithms. Through
numerical simulations, the effectiveness of the proposed approaches is
illustrated.
\end{abstract}
%%%%%%%%%%%%%%%%%%%%%%%%%%%%%%%%%%%%%%%%%%%%%%%%%%%%%%%%%%%%%%%%%%%%%%%%%%%%%%%%

\section{Introduction}  \label{Introduction}
In recent years, a smart grid has attracted a tremendous amount of
research interest due to its potential benefit to modern
civilization. The remarkable development of computer and
communication technology has enabled a realization of smart power
grid. A key feature of a smart power grid is the change of
the power distribution characteristics from a centralized power
system to a distributed power system. 
%The distributed power system
%consists of distributed power resources such as renewable energies including photovoltaic (PV), hydroelectric, solar thermal,
%and wind generators, and nuclear plants \cite{qi2011supervisory}.
Though centralized power plants still cover the major portion of
power demand, the amount of power demand covered by 
distributed power resources has been increasing steadily
\cite{moslehi2010reliability}.

In distributed power systems, achieving the supply-demand balance, which is one of the fundamental requirements, is a key challenging issue due to its
decentralized characteristics. The supply-demand balance problem has been 
traditionally considered as the economic dispatch \cite{streiffert1995multi,zhang2011decentralizing}
which minimizes the total cost of operation of generation systems.
However, the traditional dispatch problem is a highly nonlinear
optimization problem and thus it has been addressed usually in a centralized manner.

Distributed control of distributed
power systems has been considered in \cite{yasuda2003basic,xin2011self,dominguez2010coordination}.
In the distributed power system, individual power resources are
interconnected with each other through communication and
transmission line.
%Also, considering different power
%generation capacity and storage capacity, a coordination of power
%flow among distributed power resources to achieve supply-demand
%balance is required. 
%This kind of problem has been considered in
%distributed averaging \cite{xiao2004fast,baric2011distributed} that
%balances the resources in the distributed storage devices.
Thus, for the coordination of power generation
and power flow, sharing information is essential. It is more
realistic and economic to use only local information.
This kind of problem has been solved in consensus
\cite{jadbabaie2003coordination,olfati2004consensus,zhu2010discrete,hui2008distributed}.
Distributed resource allocation method named ``center-free algorithm'' has been considered in \cite{servi1980electrical}. In the algorithm, since the amount of resource for each node is determined by the sum of weighted difference with its neighbors, we can easily see the equivalence of the algorithm to the consensus. Thus, we can consider that the idea of consensus had been already used in the distributed resource allocation problems. 

In this paper, power distribution among distributed power resources
is studied. The problem is classified into three subproblems.
In the first problem, which is called \textit{power coordination}, the desired net power\footnote{The ``net power'' at a node is the sum of generated power at the node and  power flows into it from its neighbor nodes. Detailed explanation on ``net power'' can be found in the second paragraph of the problem formulation section (Section 2).}
is determined. 
In the second problem and the third problem, we consider power generation and power flow
control of distributed power resources with and without power coordination respectively.
Note that the power flow control without power coordination is significantly important
when the net power and the generation capacities of each power resource are limited. Consensus schemes are
employed to generate power coordination, power generation and power flow control
signals.

Subsequently, the main contribution of this paper can be summarized
as follows. First, the coordination and control of power generation and power flow are
precisely defined and formulated. Second, two power distribution schemes with and without
power coordination are developed, which may be helpful in
coordinating actual power generation and power flow among distributed power plants.
The proposed approach deals with the desired net power which is not necessarily an average value as in the distributed averaging problem \cite{baric2011distributed}.
Third, it is shown that the distributed consensus algorithms ensure power
distribution among distributed nodes that have net power and
generation capacity constraints. The proposed approach can be applied even if
the desired net power of some power resources does not satisfy the power generation capacity
as opposed to those in \cite{dominguez2010coordination,robbins2011control}.

This paper is organized as follows. In Section \ref{sec_prob_form},
power coordination, power generation and power flow control problems are formulated.
Main results of this paper are presented in Section
\ref{sec_main_res}. Illustrative examples are provided in Section
\ref{sec_example} and the conclusion is given in Section
\ref{sec_conc}, respectively.

%%%%%%%%%%%%%%%%%%%%%%%%%%%%%%%%%%%%%%%%%%%%%%%%%%%%%%%%%%%%%%%%%%%%%%%%%%%%%%%%
%%%%%%%%%%%%%%%%%%%%%%%%%%%%%%%%%%%%%%%%%%%%%%%%%%%%%%%%%%%%%%%%%%%%%%%%%%%%%%%%
%%%%%%%%%%%%%%%%%%%%%%%%%%%%%%%%%%%%%%%%%%%%%%%%%%%%%%%%%%%%%%%%%%%%%%%%%%%%%%%%
\section{Problem formulation} \label{sec_prob_form}
Fig. 1 shows a graph representing interconnected power systems.
\begin{figure}[t]
\centering \includegraphics [scale=0.45]{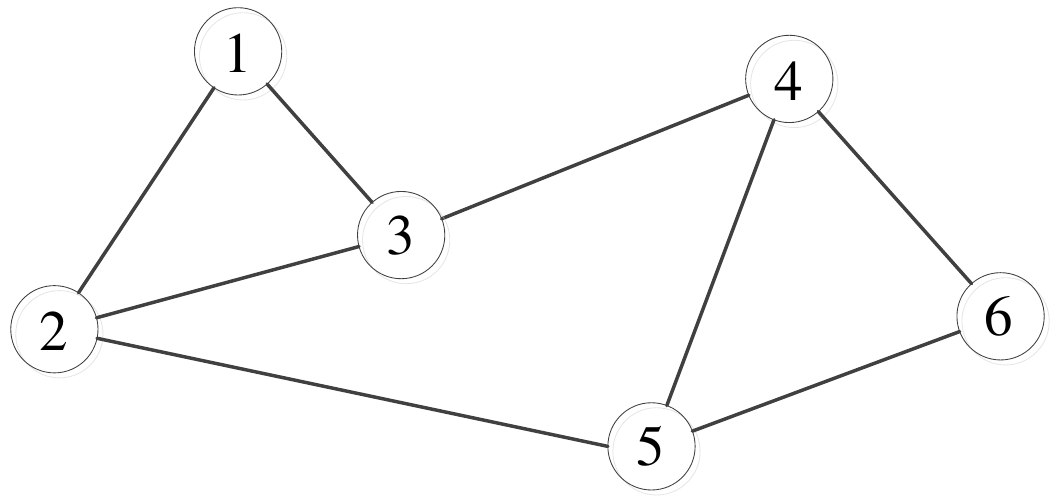}
\caption{Interconnected power systems} \label{fig1}
\end{figure}
Each node denotes a subsystem, i.e., an area that consists of generators and loads, and
it has its power demand determined by its loads. 
Thus, the desired net power for each generator in the subsystem is required to be determined.
The power can be transmitted to its neighboring subsystems since they are interconnected
with each other through transmission lines. In this paper, we would
like to control the power generation and power flow to satisfy supply-demand balance
under net power and generation capacity constraints.

Let ${p_{{i}}^d}$ be the desired net power at the $i$-th node;
${p_{{G_i}}}$ be the generated power at the $i$-th node; and
${p_{F_{ij}}}$ be the power flow from the $i$-th node to the $j$-th
node, where ${p_{F_{ij}}}= - {p_{F_{ij}}}$ and $i, j =1, \cdots, n$. 
{The net power at a node is determined after the net power
exchange of generated powers with its neighbor nodes.} Thus, the net power at the $i$-th
node can be defined as
$p_i \triangleq {p_{{G_i}}} + \sum\limits_{j \in {N_i}}
{p_{F_{ji}}}$, where $N_i$ is the set of neighbor nodes of the
$i$-th node. The total generated power and total power demand
within the power grid are represented by ${p_{G}}=\sum\limits_{i =
1}^n {{p}_{G_i}}$ and ${p_{D}}$,
respectively.

Throughout this paper, we need the following assumption to represent
a reality of power grid network systems.
\begin{assumption} \label{assump}\
\begin{enumerate}
\item Each node has the net power capacity constraint such as
\begin{equation}
\underline{{p}_{i}} \le {p_{i}}\left( k \right) \le
\overline{{p}_{i}} \label{eq1}
\end{equation}
where $k$ indicates the sampling instant of the physical power layer.
 The net power capacity is associated with the amount of power that
each node can handle. Thus, the desired net power for each node shall be
limited by the net power capacity constraint \eqref{eq1}.
\item Each node has the generation capacity constraint such as
\begin{equation}
\underline {{p_{{G_i}}}} \le {p_{{G_i}}}\left( k \right) \le
\overline {{p_{{G_i}}}} \label{eq2}
\end{equation}
The generation capacity is also assumed to be bounded by the net
power capacity constraint i.e., $\left[ {\underline {{p_{{G_i}}}}
,\overline {{p_{{G_i}}}} } \right] \subseteq \left[ {\underline
{{p_i}} ,\overline {{p_i}} } \right]$.
\item The graph which represents interconnections among nodes is undirected and connected.
This assumption comes from the fact that the power and
information flows are bidirectional. However, this assumption can be
relaxed to a directed graph.
\end{enumerate}
\end{assumption}

Note that due to the constraints \eqref{eq1} and \eqref{eq2} in the \emph{Assumption \ref{assump}}, the power coordination and power flow control problems become non-trivial.
Fig. \ref{fig2} shows a power exchange between pairs of nodes.
%The generated power at each node
%can be represented as follows:
%\begin{align}
%&{p_{{G_1}}}\left( k \right) = {p_{{G_1}}}\left( {k - 1} \right) + {\Delta p_{{G_{1}}}}\left( k \right) \label{eq3}\\
%&{p_{{G_2}}}\left( k \right) = {p_{{G_2}}}\left( {k - 1} \right) + {\Delta p_{{G_{2}}}}\left( k \right) \label{eq4}\\
%&{p_1}\left( {k} \right) = {p_{{G_1}}}\left( k \right) + {p_{{F_{21}}}}\left( k \right) \label{eq5}\\
%&{p_2}\left( {k} \right) = {p_{{G_2}}}\left( k \right) + {p_{{F_{12}}}}\left( k \right) \label{eq6}
%\end{align}
%where ${\Delta p_{{G_{i}}}}\left( k \right) = {p_{G_i}}\left( k \right) -
%{p_{G_i}}\left( k-1 \right)$ is its own variation
%in power generation and ${p_{{F_{ij}}}}$ is the
%interaction term which represents the power exchange between two
%nodes. From \eqref{eq5} and \eqref{eq6}, the total generated power
%can be represented by
%\begin{align}
%{p_{{G_1}}}\left( k \right) + {p_{{G_2}}}\left( k \right)
%&= {p_1}\left( k \right) + {p_2}\left( k \right)   \label{eq7}
%\end{align}
As shown in Fig. \ref{fig2}, the generated power and the net power at each node can be represented as follows:
\begin{align}
&{p_{{G_i}}}\left( k \right) = {p_{{G_i}}}\left( {k - 1} \right) + {\Delta p_{{G_{i}}}}\left( k \right) \label{eq8} \\
&{p_i}\left( {k} \right) = {p_{{G_i}}}\left( k \right) + \sum\limits_{j \in {N_i}} {{p_{{F_{ji}}}}\left( k \right)} ,i = 1,2, \ldots ,n \label{eq9} \\
&\sum\limits_{i = 1}^n {{p_i}\left( {k} \right)}  = \sum\limits_{i =
1}^n {{p_{{G_i}}}\left( k \right)} \label{eq10}
\end{align}
where ${\Delta p_{{G_{i}}}}\left( k \right) = {p_{G_i}}\left( k \right) -
{p_{G_i}}\left( k-1 \right)$ is its own variation
in power generation and ${p_{{F_{ji}}}}$ is the
interaction term representing the power exchange between two
nodes.
\begin{figure} [t]
\centering \includegraphics [scale=0.45]{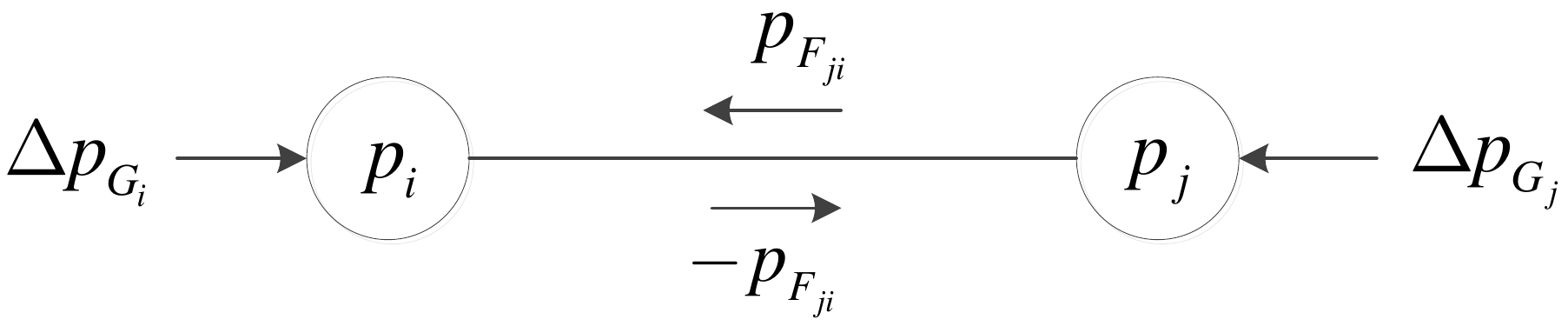} \caption{Power
exchange between pairs of nodes} \label{fig2}
\end{figure}
%\begin{remark}
%In this paper, we denote ${\Delta p_{{G_{i}}}}\left( k \right)$ as a self-loop power flow as shown in Fig. \ref{fig2}. It is a kind of control input for the $i$-th node as specified in \eqref{eq37} and \eqref{eq39}.
%The terminology ``self-loop'' is used to distinguish this control input that determines the generated power at each node from the power flow between pairs of nodes. Thus, the self-loop may be removed in the graph representation when characterizing only the connections in the physical layer or the cyber layer. 
%\end{remark}
To make the paper clear, the following problems are formally stated.
\begin{prob}\label{prob1} {\it (Power coordination)} For a given total power demand ${p_D}\left( k
\right)$ within the network, determine the desired net power
$p_{i}^d\left( k \right)$
%\footnote{To avoid notational confusion, we use the symbol $p_{i}^d\left( k \right)$ for the desired net power computed by the power coordination algorithm that should satisfy the constraints of \emph{Assumption \ref{assump}}. The symbol $p_{D_{i}}\left( k \right)$ denotes any arbitrary demanded power that is undetermined. Thus, from \emph{Assumption \ref{assump}}, $p_{i}^d\left( k \right)$ implies that it satisfies the constraints of $\underline{{p}_{i}} \le {p_{D_i}^d}\left(k \right) \le
%\overline{{p}_{i}}$.}
of individual node that satisfies the
supply-demand balance such as
\begin{equation}
{p_G}\left( k \right) = {p_D}\left( k \right) =
\sum\limits_{i = 1}^n {p_{i}^d\left( k \right)} \label{eq11}
\end{equation}
under the constraints of \emph{Assumption \ref{assump}}.
\end{prob}

\begin{prob}\label{prob2} {\it (Power generation and power flow control with power coordination)} 
For the desired net power given by the power coordination,
design power generation and power flow control strategies, i.e, ${\Delta p_{{G_{i}}}}\left( k
\right)$ and ${p_{{F_{ij}}}}\left( k \right)$, to satisfy
${p_{{i}}}\left( k \right) \to p_{i}^d\left( k \right)$ under
the constraints of \emph{Assumption \ref{assump}}.
\end{prob}

The \emph{Problem \ref{prob2}} is considered in a subsystem i.e., an area which consists of generators and loads. Thus, it is assumed that the total power demand is given as usually assumed in traditional power dispatch problems. As shown in below, only one node is required to know the total power demand. Thus, the desired net power for each node is determined by power coordination, and power generation is controlled to meet the supply-demand balance. This scenario is formulated under the name of ``With power coordination".

\begin{prob}\label{prob3} {\it (Power generation and power flow control without power coordination)} 
For the desired net power which is individually given without power coordination,
design power generation and power flow control strategies, i.e, ${\Delta p_{{G_{i}}}}\left( k
\right)$ and ${p_{{F_{ij}}}}\left( k \right)$, to satisfy
${p_{{i}}}\left( k \right) \to p_{i}^d\left( k \right)$ under
the constraints of \emph{Assumption \ref{assump}}.
\end{prob}

The \emph{Problem \ref{prob3}} is considered in interconnected subsystems under the assumption that each area has only one collective generator and one collective load. Thus, it is assumed that the desired net power is individually given to each node. Thus, power coordination is not necessary, but the power generation and power flow are controlled to meet the supply-demand balance and given-individual desired net power for each node simultaneously. This scenario is formulated under the name of ``Without power coordination''.

%In this paper, we consider two scenarios.
%The first scenario is considered in a subsystem i.e., an area which consists of generators and loads. Thus, it is assumed that the total power demand is given as usually assumed in traditional power dispatch problems. As shown in below, only one node is required to know the total power demand. Thus, in the first scenario, the desired net power for each node is determined by power coordination, and power generation is controlled to meet the supply-demand balance.
%On the other hand, the second scenario is considered in interconnected subsystems under the assumption that each area has only one collective generator and one collective load. Thus, it is assumed that the desired net power is individually given for each node. Thus, in the second scenario, power coordination is not necessary, but the power generation and power flow are controlled to meet the supply-demand balance and
%given-individual desired net power for each node simultaneously.
%The first scenario is formulated under the name of ``with power coordination'' and the second scenario is formulated under the name of ``without power coordination'' respectively, in the next section.

\begin{remark}
Power system transmission lines have a very high $X/R$ ratio. 
Thus, the real power changes are primarily dependent on phase angle differences among generators while they are relatively not affected by voltages. Further, the phase angle dynamics is generally much faster than the voltage dynamics and thus it is usually assumed that the phase angle dynamics and the voltage dynamics are decoupled \cite{saadat1999power}.
In our problem setup, since we deal with the real power, one can see that the phase angle is implicitly considered. Also, in this paper, we mainly concentrate on power distribution, transmission, and coordination issues in operation level. Thus, the lower level signal characteristics are not considered. Also, it is assumed that power flow information can be calculated from the sequence of voltage and current phasors obtained by phasor measurement units (PMUs) \cite{de2010synchronized}.
\end{remark}
%%%%%%%%%%%%%%%%%%%%%%%%%%%%%%%%%%%%%%%%%%%%%%%%%%%%%%%%%%%%%%%%%%%%%%%%%%%%%%%%%%%

\begin{remark}
The problem formulated by (\ref{eq8}), (\ref{eq9}), and (\ref{eq10}) can cover various attribute distribution problems such as gas, water and oil operation, and supply chain management, traffic control, and renewable energy allocation. This paper just focuses on power distribution problem, which has been recently more researched \cite{yasuda2003basic,xin2011self,dominguez2010coordination}.   
\end{remark}

%%%%%%%%%%%%%%%%%%%%%%%%%%%%%%%%%%%%%%%%%%%%%%%%%%%%%%%%%%%%%%%%%%%%%%%%%%%%%%%%%%%
%%%%%%%%%%%%%%%%%%%%%%%%%%%%%%%%%%%%%%%%%%%%%%%%%%%%%%%%%%%%%%%%%%%%%%%%%%%%%%%%
\section{Main results} \label{sec_main_res}

%%%%%%%%%%%%%%%%%%%%%%%%%%%%%%%%%%%%%%%%%%%%%%%%%%%%%%%%%%%%%%%%%%%%%%%%%%%%%%%%%%%
%%%%%%%%%%%%%%%%%%%%%%%%%%%%%%%%%%%%%%%%%%%%%%%%%%%%%%%%%%%%%%%%%%%%%%%%%%%%%%%%%%%
\subsection{Power coordination}
In the power coordination, a desired net power is required to be realizable \cite{ahn2010command},
i.e., physical constraints such as generation
capacity and power flow constraint are required to be satisfied. If the
desired net power does not satisfy these constraints, we cannot
achieve the control goal with any control input since the desired net power is not realizable. 
%Thus, the generation of desired power
%should be carefully considered in advance when designing control law
%for a distributed system. 
%In our problem setup, the basic
%assumptions are given in \emph{Assumption \ref{assump}}. Thus, 
From
\eqref{eq2} and \eqref{eq11}, the following condition for a
realization 
(i.e., to make the desired net power realizable)
can be obtained:
\begin{align}
\underline{{p_G}} \le {p_{D}}\left( k \right) \le \overline{{p_G}}
\label{eq12}
\end{align}
Let us consider the power coordination issue more systematically.
%If the desired net power is supposed to be upper-bounded due to
%generation capacity, i.e., $p_{i}^d\left( k \right) \le
%{\overline {p_{G_{i}}}}$, then the desired power can be determined
%as in \cite{dominguez2010coordination} as follows:
%\begin{equation}
%p_{i}^d\left( k \right) = \left( {\frac{{{p_D}\left( k \right)}}
%{{\overline {{p_G}} }}} \right)\overline {{p_{{G_i}}}}  \leq
%\overline {{p_{{G_i}}}} \label{eq13}
%\end{equation}
%Similarly, if the desired net power is supposed to be
%lower-bounded, we can obtain the desired power for the
%lower-bounded case, i.e., $p_{i}^d\left( k \right) \ge
%{\underline {p_{G_{i}}}}$, as follows:
%\begin{equation}
%p_{i}^d\left( k \right) = \left( {\frac{{{p_D}\left( k \right)}}
%{{\underline {{p_G}} }}} \right)\underline {{p_{{G_i}}}}  \geq
%\underline {{p_{{G_i}}}} \label{eq14}
%\end{equation}
If the desired net power is both upper- and lower-bounded, then we can
make the following rule for the power coordination.
\begin{lem} If the desired net power is
lower-bounded and upper-bounded, i.e., ${\underline {p_{G_{i}}}} \le
p_{i}^d\left( k \right) \le {\overline {p_{G_{i}}}}$ and \eqref{eq12} is satisfied, then the
following desired net power
\begin{equation}
p_{i}^d\left( k \right) = {\underline {p_{G_{i}}}} +
\frac{{{\overline {p_{G_{i}}}} - {\underline
{p_{G_{i}}}}}}{{{\overline {p_G}} - {\underline {p_G}}}} \left(
{{p_D}\left( k \right) - {\underline {p_G}}} \right) \label{eq15}
\end{equation}
provides a sufficient condition for the supply-demand balance
\eqref{eq11}.
\end{lem}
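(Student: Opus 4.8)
The plan is to treat this as a direct verification: substitute the proposed allocation \eqref{eq15} into the balance equation \eqref{eq11} and then check feasibility against \emph{Assumption \ref{assump}}. The one structural fact I would record up front is that, because $p_G=\sum_{i=1}^n p_{G_i}$, the aggregate capacity bounds decompose additively, i.e. $\overline{p_G}=\sum_{i=1}^n\overline{p_{G_i}}$ and $\underline{p_G}=\sum_{i=1}^n\underline{p_{G_i}}$. I would also note that \eqref{eq12} together with $\overline{p_G}>\underline{p_G}$ (the nondegenerate case; otherwise all bounds coincide and the statement is trivial) makes the scalar
\[
\lambda(k)\triangleq\frac{p_D(k)-\underline{p_G}}{\overline{p_G}-\underline{p_G}}
\]
well defined, and I claim it lies in $[0,1]$.

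First I would sum \eqref{eq15} over $i=1,\dots,n$. The constant terms add to $\underline{p_G}$, and the coefficients multiplying $\lambda(k)$ telescope, since $\sum_{i=1}^n(\overline{p_{G_i}}-\underline{p_{G_i}})=\overline{p_G}-\underline{p_G}$. Hence $\sum_{i=1}^n p_i^d(k)=\underline{p_G}+\lambda(k)(\overline{p_G}-\underline{p_G})=p_D(k)$, which is exactly the right-hand equality in \eqref{eq11}; combined with the supply-side requirement $p_G(k)=p_D(k)$ (the target enforced by the generation controller), this yields \eqref{eq11}.

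Second I would verify that the allocation \eqref{eq15} is realizable. From \eqref{eq12} we get $0\le p_D(k)-\underline{p_G}\le\overline{p_G}-\underline{p_G}$, so $\lambda(k)\in[0,1]$; then \eqref{eq15} writes $p_i^d(k)=(1-\lambda(k))\,\underline{p_{G_i}}+\lambda(k)\,\overline{p_{G_i}}$ as a convex combination of the two endpoints, giving $\underline{p_{G_i}}\le p_i^d(k)\le\overline{p_{G_i}}$, i.e. \eqref{eq2}. The net power constraint \eqref{eq1} then follows immediately from the inclusion $[\underline{p_{G_i}},\overline{p_{G_i}}]\subseteq[\underline{p_i},\overline{p_i}]$ in \emph{Assumption \ref{assump}}. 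Honestly there is no deep obstacle here: the only points requiring care are the additivity of the aggregate bounds and the nondegeneracy $\overline{p_G}\neq\underline{p_G}$ needed for \eqref{eq15} to be well posed; the rest is the telescoping sum and the convex-combination observation.
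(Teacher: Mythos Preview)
Your proposal is correct and follows essentially the same route as the paper: sum \eqref{eq15} over $i$ to recover $\sum_i p_i^d(k)=p_D(k)$, and then check that each $p_i^d(k)$ lies in $[\underline{p_{G_i}},\overline{p_{G_i}}]$. The only cosmetic difference is that the paper verifies the two inequalities by direct subtraction (computing $\underline{p_{G_i}}-p_i^d(k)\le 0$ and $p_i^d(k)-\overline{p_{G_i}}\le 0$ separately), whereas you package the same computation as the observation that $\lambda(k)\in[0,1]$ makes \eqref{eq15} a convex combination of the endpoints; your remarks on nondegeneracy and on the net-power inclusion from \emph{Assumption~\ref{assump}} are welcome additions the paper leaves implicit.
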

\begin{proof}
If $p_{{i}}^d\left( k \right)$ is given by \eqref{eq15}, then,
under the assumption \eqref{eq12}, we have
\begin{align}
\underline {{p_{{G_i}}}}  - p_{{i}}^d\left( k \right)&=
\underline {{p_{{G_i}}}}  - \underline {{p_{{G_i}}}}  -
\frac{{\overline {{p_{{G_i}}}}  - \underline {{p_{{G_i}}}} }}
{{\overline {{p_G}}  - \underline {{p_G}} }} \left( {{p_D}\left( k \right) - \underline {{p_G}} } \right)
\nonumber\\
&=  - \frac{{\overline {{p_{{G_i}}}}  - \underline {{p_{{G_i}}}} }}
{{\overline {{p_G}}  - \underline {{p_G}} }} \left( {{p_D}\left( k
\right) - \underline {{p_G}} } \right) \le 0 \label{eq17}
\end{align}
which satisfies the left-side inequality ${\underline {p_{G_{i}}}}
\le p_{{i}}^d\left( k \right)$. Similarly, if $p_{{i}}^d\left( k
\right)$ is given by \eqref{eq15}, then, under the assumption
\eqref{eq12}, we obtain
\begin{align}
p_{{i}}^d\left( k \right) - \overline {{p_{{G_i}}}} 
&= \underline {{p_{{G_i}}}}  + \frac{{\overline {{p_{{G_i}}}}  -
\underline {{p_{{G_i}}}} }}
{{\overline {{p_G}}  - \underline {{p_G}} }} \left( {{p_D}\left( k \right) - \underline {{p_G}} } \right) - \overline {{p_{{G_i}}}} \nonumber\\
&= \frac{{\left( {\overline {{p_G}}  \!-\! \underline {{p_G}} }
\right)\left( {\underline {{p_{{G_i}}}}  \!-\! \overline {{p_{{G_i}}}} }
\right) \!+\! \left( {\overline {{p_{{G_i}}}}  \!-\! \underline
{{p_{{G_i}}}} } \right)\left( {{p_D}\left( k \right) \!-\! \underline
{{p_G}} } \right)}}
{{\overline {{p_G}}  - \underline {{p_G}} }} \nonumber\\
&= \frac{{\left( {\overline {{p_{{G_i}}}}  - \underline
{{p_{{G_i}}}} } \right)\left( {{p_D}\left( k \right) - \overline
{{p_G}} } \right)}} {{\overline {{p_G}}  - \underline {{p_G}} }} \le
0 \label{eq20}
\end{align}
Thus, the right-side inequality $p_{{i}}^d\left( k \right) \le
{\overline {p_{G_{i}}}}$ is also satisfied. 
Furthermore, summing
up the both sides of \eqref{eq15} over $i$ from $1$ to $n$ yields
%$\sum\limits_{i = 1}^n {p_i^d\left( k \right)}  = {p_D}\left( k \right)$  
\begin{align}
\sum\limits_{i = 1}^n {p_{i}^d\left( k \right)}  &=
\sum\limits_{i = 1}^n {\underline {{p_{{G_i}}}} }  +
\frac{{\sum\limits_{i = 1}^n {\left( {\overline {{p_{{G_i}}}}  -
\underline {{p_{{G_i}}}} } \right)} }} {{\overline {{p_G}}  -
\underline {{p_G}} }}\left( {{p_D}\left( k \right) - \underline
{{p_G}} } \right) \nonumber\\
&= {p_D}\left( k \right) \label{eq21}
\end{align}  
\end{proof}

%\begin{remark}
%The meaning of the power coordination is that if the desired net power of individual nodes  is provided by \eqref{eq15}, then the total demanded power in the grid network, i.e., $p_D(k)$, at the sampling instant $k$ can be achieved while satisfying the lower- and upper-bounded constraints. Without the power coordination, if the desired net power of individual nodes is computed arbitrarily, then the total demanded power may not be achieved.  
%\end{remark}

Note that the power coordination \eqref{eq15} requires global
information such as ${\underline {p_G}}$ and ${\overline {p_G}}$.
The coordination scheme can be, however, achieved by distributed
consensus algorithm using only local information as in
\cite{cady2011robust}. For that, we need the following lemma for a further investigation.
%\begin{lem} \label{lem2} \cite{horn1990matrix}
%\emph{(Perron-Frobenius theorem)} Let $A \in {\bf R}^{n \times n}$
%and suppose that $A$ is irreducible and nonnegative. Then,
%\begin{enumerate}
%\item The spectral radius $\rho \left( A \right) > 0$
%\item $\rho \left( A \right)$ is an eigenvalue of $A$
%\item There is an $r \in {{\bf{C}}^n}$ with $r > 0$ and $A r = \rho \left( A \right)r$
%\item $\rho \left( A \right)$ is an algebraically (and hence geometrically) simple eigenvalue of $A$
%\end{enumerate}
%\end{lem}

\begin{lem} \label{lem3} \cite{horn1990matrix}
 If $A \in {\bf R}^{n \times n}$ is nonnegative and primitive, then
\begin{equation}
\mathop {\lim }\limits_{m \to \infty } {\left[ {\rho {{\left( A
\right)}^{ - 1}}A} \right]^m} \to L > 0 \label{eq22}
\end{equation}
where $L \triangleq r{l^T}$, $Ar = \rho \left( A \right)r$, $A^Tl = \rho
\left( A \right)l$, $r > 0$ and $l > 0$ in element-wise, and $r^Tl = 1$ (In fact, $r$ and $l$ are the right and left eigenvectors corresponding to the eigenvalue $\rho \left( A \right)$).
\end{lem}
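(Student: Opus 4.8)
The plan is to obtain the statement from the Perron--Frobenius theorem for primitive matrices together with the spectral (Jordan) decomposition of the normalized matrix $B \triangleq \rho(A)^{-1}A$. First I would recall the Perron--Frobenius structure: since $A$ is nonnegative and primitive, $\rho \triangleq \rho(A) > 0$ is an eigenvalue of $A$, it is algebraically simple, and it is \emph{strictly} dominant in modulus, i.e.\ every other eigenvalue $\lambda$ of $A$ satisfies $|\lambda| < \rho$; moreover the associated right and left eigenvectors can be chosen entrywise positive, $Ar = \rho r$, $A^T l = \rho l$ with $r > 0$, $l > 0$, and we normalize so that $r^T l = 1$. (Primitivity, as opposed to mere irreducibility, is exactly what forces the strict inequality $|\lambda| < \rho$; for an irreducible but periodic matrix the normalized powers do not converge.)

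Next, set $B \triangleq \rho^{-1}A$, so that $\rho(B)=1$, the eigenvalue $1$ is simple, and every other eigenvalue of $B$ has modulus strictly less than $1$. Define $L \triangleq r l^T$. Using $l^T r = 1$ one checks directly that $L^2 = r(l^T r)l^T = L$, that $BL = \rho^{-1}(Ar)l^T = r l^T = L$, and that $LB = r(\rho^{-1}A^T l)^T = r l^T = L$; hence $L$ is precisely the spectral projection of $B$ onto the eigenvalue $1$. Writing $B = L + M$ with $M \triangleq B - L = B(I-L)$, one gets $LM = ML = 0$, and the spectrum of $M$ is $\{0\}\cup(\sigma(B)\setminus\{1\})$, so $\rho(M) < 1$.

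Then I would pass to the limit. Because $L$ and $M$ commute, $LM = ML = 0$, and $L$ is idempotent, every cross term in the binomial expansion drops out, giving $B^m = L^m + M^m = L + M^m$ for all $m \ge 1$. Since $\rho(M) < 1$ we have $M^m \to 0$ as $m \to \infty$, hence
\[
\bigl[\rho(A)^{-1}A\bigr]^m = B^m \longrightarrow L = r l^T .
\]
Finally, $r > 0$ and $l > 0$ entrywise imply $L = r l^T > 0$ entrywise, which is the assertion.

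The main obstacle is concentrated in the first two steps: establishing that, for a primitive nonnegative matrix, the spectral radius is a simple and strictly dominant eigenvalue and that its spectral projection is the rank-one matrix $r l^T$. Once this Perron--Frobenius structure is in hand, the convergence argument is routine. Since the statement is quoted from \cite{horn1990matrix}, one may alternatively just cite that reference; the sketch above records the underlying mechanism and, in particular, why primitivity is indispensable.
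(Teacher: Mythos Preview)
Your argument is correct and is essentially the standard proof (as in \cite{horn1990matrix}): Perron--Frobenius for primitive matrices gives a simple, strictly dominant eigenvalue with positive eigenvectors, and then the decomposition $B=L+M$ with $L$ the rank-one spectral projector and $\rho(M)<1$ yields $B^m=L+M^m\to L$. The paper itself does not prove this lemma; it simply quotes it from \cite{horn1990matrix}, so there is nothing further to compare---your sketch supplies precisely the mechanism behind the cited result, and your remark that primitivity (not mere irreducibility) is what guarantees strict dominance and hence convergence is the key point.
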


Now, we state one of the main results of this subsection.
\begin{theorem} \label{thm1}
The power coordination \eqref{eq15} is achieved if the following
consensus scheme is used:
\begin{equation}
p_{i}^d\left( k \right) = \underline {{p_{{G_i}}}}  + \left(
{\frac{{\overline {{p_{{G_i}}}}  - \underline {{p_{{G_i}}}} }}
{{{y_{i,ss}}}}} \right){{x_{i,ss}}} \label{eq23}
\end{equation}
where ${x_{i,ss}}$ and ${y_{i,ss}}$ are steady state solutions of the
following equations:
\begin{align}
&{x_i}\left( {t + 1} \right) = \frac{1} {{1 + {\left| {{N_i}}
\right|} }}{x_i}\left( t \right) + \sum\limits_{j \in {N_i}}
{\frac{1}
{{1 + {\left| {{N_j}} \right|}}}{x_j}\left( t \right)} \label{eq24} \\
&{y_i}\left( {t + 1} \right) = \frac{1} {{1 + {\left| {{N_i}}
\right|}}}{y_i}\left( t \right) + \sum\limits_{j \in {N_i}}
{\frac{1}
{{1 + {\left| {{N_j}} \right|}}}{y_j}\left( t \right)} \label{eq26} \end{align}
with the following initial values
\begin{align}
&{x_i}\left( 0 \right) = \left\{ {\begin{array}{*{20}{c}}
   {{p_D}\left( k \right) - \underline {{p_{{G_i}}}} } && \text{\rm{if}} ~ {i~ {\rm{ is~ the ~leading~ node}}}  \\
   { - \underline {{p_{{G_i}}}} } && \text{\rm{otherwise}}  {}  \\
\end{array}} \right. \label{eq25} \\
&{y_i}\left( 0 \right) = \overline{p_{G_{i}}} -
\underline{p_{G_{i}}} \label{eq27}
\end{align}
respectively, where $x_i(t), y_i(t) \in {\bf R}$, and ${{\left| {{N_i}} \right|} }$ is the degree of the $i$-th
node 
%(i.e., the number of neighbors of the $i$-th node in the graph
%which represents the network structure of the power system) 
and the index $t$ represents the sampling instant at the consensus algorithm. The consensus algorithm (\ref{eq23})-(\ref{eq27}) is completely decoupled from the physical power layer.
\end{theorem}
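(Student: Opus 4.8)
The plan is to write the two scalar recursions \eqref{eq24} and \eqref{eq26} as a single linear iteration $z(t+1)=Az(t)$ governed by the common matrix $A\in{\bf R}^{n\times n}$ whose $(i,j)$ entry equals $1/(1+|N_j|)$ when $j\in N_i\cup\{i\}$ and $0$ otherwise, and then to use \emph{Lemma \ref{lem3}} to read off the steady states $x_{i,ss}$ and $y_{i,ss}$. First I would verify that $A$ satisfies the hypotheses of \emph{Lemma \ref{lem3}}: it is nonnegative by construction; it is irreducible because, by the third part of \emph{Assumption \ref{assump}}, the interconnection graph is connected (and undirected), and the nonzero pattern of $A$ contains every edge; and since each diagonal entry $A_{ii}=1/(1+|N_i|)$ is strictly positive, $A$ is aperiodic and hence primitive. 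A one-line count shows that every column of $A$ sums to one — in column $j$ the nonzero entries occupy exactly the rows $i\in N_j\cup\{j\}$, each equal to $1/(1+|N_j|)$ — so $\mathbf{1}^TA=\mathbf{1}^T$, $\rho(A)=1$, and $\mathbf{1}$ is a left eigenvector associated with this eigenvalue.

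Next, applying \emph{Lemma \ref{lem3}} with $\rho(A)=1$ gives $A^m\to L=rl^T$ as $m\to\infty$, where $r,l>0$, $Ar=r$, $A^Tl=l$ and $r^Tl=1$. Because $A$ is primitive the Perron eigenvalue $1$ is simple, so its left eigenvector is unique up to scale and must be a positive multiple of $\mathbf{1}$; write $l=c\mathbf{1}$ with $c>0$. Passing the limit through the initial data yields $x_{i,ss}=r_i\,l^Tx(0)=c\,r_i\sum_j x_j(0)$ and $y_{i,ss}=r_i\,l^Ty(0)=c\,r_i\sum_j y_j(0)$. The key point is that $x$ and $y$ run under the same matrix, so they converge to parallel vectors and the a priori unknown Perron vector $r$ cancels in the ratio, leaving $x_{i,ss}/y_{i,ss}=\big(\sum_j x_j(0)\big)/\big(\sum_j y_j(0)\big)$, which is independent of $i$. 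Moreover $y_{i,ss}>0$ since $r_i>0$ and $\sum_j y_j(0)=\overline{p_G}-\underline{p_G}>0$, so \eqref{eq23} is well defined.

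It then remains to evaluate the two sums from the prescribed initial conditions. Summing \eqref{eq25} over $i$, only the leading node contributes the extra $p_D(k)$ term, so $\sum_j x_j(0)=p_D(k)-\sum_j\underline{p_{G_j}}=p_D(k)-\underline{p_G}$; summing \eqref{eq27} gives $\sum_j y_j(0)=\overline{p_G}-\underline{p_G}$. Substituting both into $p_i^d(k)=\underline{p_{G_i}}+(\overline{p_{G_i}}-\underline{p_{G_i}})\,x_{i,ss}/y_{i,ss}$ reproduces \eqref{eq15} exactly. The decoupling claim requires no further argument, since \eqref{eq24}--\eqref{eq27} involve only the static quantities $\underline{p_{G_i}}$, $\overline{p_{G_i}}$, $p_D(k)$ and the graph topology, with no dependence on the physical-layer variable $p_i(k)$.

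I expect the only delicate part — and it is routine — to be the matrix-analytic preamble: confirming that $A$ is primitive with $\rho(A)=1$, and, more to the point, recognizing that the shared left eigenvector $\mathbf{1}$ is precisely what makes the nodewise Perron weights $r_i$ drop out of the ratio $x_{i,ss}/y_{i,ss}$. Everything after that is bookkeeping on the initial values.
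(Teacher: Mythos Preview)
Your proposal is correct and follows essentially the same route as the paper: write the iteration in matrix form, verify the update matrix is nonnegative, irreducible, and primitive with $\rho=1$ and left eigenvector $\mathbf{1}$, invoke \emph{Lemma~\ref{lem3}} to obtain $x_{ss}=(\mathbf{1}^Tx(0))\,r$ and $y_{ss}=(\mathbf{1}^Ty(0))\,r$, and then observe that the unknown Perron entries $r_i$ cancel in the ratio $x_{i,ss}/y_{i,ss}$ so that \eqref{eq23} reduces to \eqref{eq15}. Your argument is in fact slightly more careful than the paper's on two points: you justify primitivity via aperiodicity from the positive diagonal rather than by appeal to the spectrum, and you correctly identify the matrix as column stochastic (the paper calls it row stochastic, though its subsequent use of $l=\mathbf{1}$ as the left Perron vector is consistent with column stochasticity).
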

\begin{proof}
Without loss of generality, it is assumed that the first node is the
leading node which knows the total power demand $p_D\left( k \right)$.
Then, the consensus scheme \eqref{eq24} and \eqref{eq25} can be
represented by
\begin{align}
&x\left( {t + 1} \right) = Qx\left( t \right) \label{eq28}\\
&x\left( 0 \right) = {\left[ {{p_D}\left( k \right) - \underline
{{p_{{G_1}}}} , - \underline {{p_{{G_2}}}} , - \underline
{{p_{{G_3}}}} , \ldots , - \underline {{p_{{G_n}}}} } \right]^T}
\label{eq29}
\end{align}
where $x(t)=[x_1(t), \ldots, x_n(t)]^T$ and $Q = \left[ {{q_{ij}}} \right]$ is nonnegative row stochastic
matrix because ${q_{ij}} = {1 \mathord{\left/
 {\vphantom {1 {\left( {1 + \left| {{N_j}} \right|} \right)}}} \right.
 \kern-\nulldelimiterspace} {\left( {1 + \left| {{N_j}} \right|} \right)}}
$ if $j \in {N_i} \cup \left\{ i \right\}$ and $0$ otherwise. The
matrix $Q$ is irreducible because the associated graph for $Q$ is
undirected and connected. Also, $Q$ is primitive because it is
irreducible and has exactly one eigenvalue of maximum modulus
$\lambda  = \rho \left( Q \right) = 1$ (see \cite{olfati2004consensus}) according to \emph{Perron-Frobenius theorem} \cite{horn1990matrix}. 
Thus, according to
\emph{Lemma \ref{lem3}}, the vector $x$ converges to its steady
state solution ${x_{ss}} = \mathop {\lim }\limits_{m \to \infty }
{Q^m}x\left( 0 \right)$ if there exists a limit of $\mathop {\lim
}\limits_{m \to \infty } {Q^m}$. According to \emph{Lemma
\ref{lem3}}, this limit exists for the primitive matrix $Q$ and the steady
state solution is given by ${x_{ss}} = r{l^T}x\left( 0 \right)$,
where $Qr = \rho \left( Q \right)r,{Q^T}l = \rho \left( Q \right)l,r
> 0,l > 0$ in element-wise, and ${r^T}l = 1$ with $l = {\bf{1}}$ (here, ${\bf{1}}$ denotes a vector with ones as its element) and $r = {\left[
{{r_1},{r_2}, \ldots ,{r_n}} \right]^T}$ which satisfies
${r^T}{\bf{1}} = 1$. Thus, this solution is given by
\begin{align}
{x_{ss}} = r{l^T}x\left( 0 \right) = {{\bf{1}}^T}x\left( 0 \right)r = \left( {{p_D}\left( k \right) -
\underline {{p_G}} } \right)r \label{eq30}
\end{align}
In the similar manner, the steady state solution for
\begin{align}
&y\left( {t + 1} \right) = Qy\left( t \right) \label{eq31}\\
&y\left( 0 \right) = {\left[ {\overline {{p_{{G_1}}}}  - \underline
{{p_{{G_1}}}} ,\overline {{p_{{G_2}}}}  - \underline {{p_{{G_2}}}} ,
\ldots ,\overline {{p_{{G_n}}}}  - \underline {{p_{{G_n}}}} }
\right]^T} \label{eq32}
\end{align}
is given by
\begin{align}
{y_{ss}} = r{l^T}y\left( 0 \right) = {{\bf{1}}^T}y\left( 0 \right)r =
\left( {\overline {{p_G}}  - \underline {{p_G}} } \right)r
\label{eq33}
\end{align}
Thus, it follows from \eqref{eq30} and \eqref{eq33} that \eqref{eq23}
 can be represented by
\begin{align}
p_{i}^d\left( k \right) 
&= \underline {{p_{{G_i}}}}  + \frac{{\overline {{p_{{G_i}}}}  - \underline {{p_{{G_i}}}} }}{{\left( {\overline {{p_G}}  - \underline {{p_G}} } \right){r_i}}}  \left( {{p_D}\left( k \right) - \underline {{p_G}} } \right){r_i} \label{eq35}\\
&= \underline {{p_{{G_i}}}}  + \frac{{\overline {{p_{{G_i}}}}  -
\underline {{p_{{G_i}}}} }}{{\overline {{p_G}}  - \underline {{p_G}}
}}  \left( {{p_D}\left( k \right) - \underline {{p_G}} } \right)
\label{eq36}
\end{align}
which
is equivalent to \eqref{eq15}.  
\end{proof}

Since the power coordination law is fully distributed, it can be applied even if some power resources are locally added to or removed from the power grid network under the realizability assumption \eqref{eq12}.

\begin{remark}
%In the consensus algorithm  \eqref{eq24}-\eqref{eq27},
The convergence of the algorithm \eqref{eq24}-\eqref{eq27} can be ensured 
if the sampling of the algorithm is
much faster than that of the physical layer. Thus, the power coordination scheme of \eqref{eq23}-\eqref{eq27} is feasible in practice. In more detail, let $T\left(k\right)$ be the time at the $k$-th sample instant in the physical power layer. Then,
the power coordination \eqref{eq23}-\eqref{eq27} should be
completed in $\Delta k \triangleq T\left(k+1\right) -
T\left(k\right)$. Thus, the time interval $\Delta t\triangleq
T\left(t+1\right) - T\left(t\right)$ for iterations
\eqref{eq24}-\eqref{eq27} should be chosen to $\Delta t \ll \Delta
k$ so that the steady state solutions of \eqref{eq24}-\eqref{eq27}
can be obtained in $\Delta k$.
\end{remark}

%%%%%%%%%%%%%%%%%%%%%%%%%%%%%%%%%%%%%%%%%%%%%%%%%%%%%%%%%%%%%%%%%%%%%%%%%%%%%%%%
%%%%%%%%%%%%%%%%%%%%%%%%%%%%%%%%%%%%%%%%%%%%%%%%%%%%%%%%%%%%%%%%%%%%%%%%%%%%%%%%
\subsection{Power generation and power flow control}
%The desired net power for each node can be determined by command
%coordination law \eqref{eq23}-\eqref{eq27}, or it can be
%individually given for each node without power coordination. 
As mentioned in \emph{Problem \ref{prob2}} and \emph{Problem \ref{prob3}}, we attempt to design
power generation ${\Delta p_{{G_{i}}}}\left( k \right)$ and power flow ${p_{{F_{ij}}}}\left( k
\right)$ in order to make net power be equal to the desired net power at
each node with and without power coordination, respectively. From \eqref{eq8}-\eqref{eq9}, the net power of each node can be
described as follows:
\begin{align}
&{p_i}\left( {k} \right) = {p_{{G_i}}}\left( {k - 1} \right) + {u_i}\left( k \right) \label{eq37} \\
&{u_i}\left( k \right) = {\Delta p_{{G_{i}}}}\left( k \right) + \sum\limits_{j
\in {N_i}} {p_{{F_{ji}}}}\left(k\right) \label{eq38}
\end{align}
where ${u_i}$ is the control input at the $i$-th node.
%, ${u_{ii}}$
%is the control input that controls power generation at each
%node, and ${u_{ij}}$ is the interaction control input which is the power
%flow between the $i$-th node and its neighbor nodes, i.e., ${p_{{F_{ij}}}}$; that is,
%\begin{align}
%&{u_{ii}}\left( k \right) = {\Delta p_{{G_{i}}}}\left( k \right) \label{eq39} \\
%&{u_{ij}}\left( k \right) = {p_{{F_{ji}}}}\left(k\right) \label{eq40}
%\end{align}

It is remarkable that, considering the generation
capacity, the power generation control input should satisfy the following
constraint
\begin{equation}
\underline {{\Delta p_{{G_{i}}}}} \left( k \right) \le
{\Delta p_{{G_{i}}}}\left( k \right) \le \overline {{\Delta p_{{G_{i}}}}} \left(
k \right)
 \label{eq47}
\end{equation}
where $\overline{\Delta p_{{G_{i}}}}\left( k \right) =
\overline{p_{{G_i}}} - {p_{{G_i}}}\left( k - 1 \right)$ and
$\underline{\Delta p_{{G_{i}}}}\left( k \right) = \underline{p_{{G_i}}} -
{p_{{G_i}}}\left( k - 1 \right)$. Define the net power flow
at the $i$-th node as follows
\begin{equation}
{p_{{F_i}}}\left( k \right) = \sum\limits_{j \in {N_i}}
{{p_{{F_{ji}}}}\left( k \right)} \label{eq41}
\end{equation}
and define the coordination error at each node as
\begin{equation}
{p_{e,i}}\left( k \right) = {p_{{i}}}\left( {k} \right) -
p_{i}^d\left( k \right) \label{eq42}
\end{equation}

In the sequel, we provide power generation and power flow control schemes with and without power coordination taking account of two different scenarios mentioned in Section \ref{sec_prob_form}. 
%%%%%%%%%%%%%%%%%%%%%%%%%%%%%%%%%%%%%%%%%%%%%%%%%%%%%%%%%%%%%%%%%%%%%%%%%%%%%%%%
\subsubsection{With power coordination}
With the power coordination \eqref{eq23}-\eqref{eq27}, the desired
net power for each node should satisfy the generation capacity of each
node. From \eqref{eq42}, the coordination error is given by
\begin{equation}
{p_{e,i}}\left( k \right) = {p_{{G_i}}}\left( {k - 1} \right) +
{u_i}\left( k \right) - p_{i}^d\left( k \right) \label{eq43}
\end{equation}
Our goal is to design ${u_i}\left( k \right)$ such that the coordination
error becomes zero.
\begin{theorem}
With the power coordination \eqref{eq23}-\eqref{eq27}, if control
input is given by
\begin{equation}
{u_i}\left( k \right)
= {\Delta p_{{G_{i}}}}\left( k \right) = p_{i}^d\left( k \right) -
{p_{{G_i}}}\left( {k - 1} \right) \label{eq44}
\end{equation}
then the constraint \eqref{eq47} and ${p_i}\left( k \right) \to p_{i}^d\left( k \right)$ can be
ensured.
\end{theorem}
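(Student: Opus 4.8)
The plan is to reduce both claims to the box constraint $\underline{p_{G_i}} \le p_i^d(k) \le \overline{p_{G_i}}$ that the power coordination already enforces, after which everything follows by elementary substitution. First I would invoke \emph{Theorem \ref{thm1}}, which asserts that the consensus scheme \eqref{eq23}-\eqref{eq27} reproduces exactly the closed-form desired net power \eqref{eq15}. Consequently the two chains of inequalities \eqref{eq17} and \eqref{eq20} derived in the proof of the first lemma of this section apply verbatim here: under the realizability condition \eqref{eq12}, one has $\underline{p_{G_i}} \le p_i^d(k) \le \overline{p_{G_i}}$ for every node $i$.

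Next I would verify the generation-capacity constraint \eqref{eq47}. With the proposed input, $\Delta p_{G_i}(k) = p_i^d(k) - p_{G_i}(k-1)$; subtracting $p_{G_i}(k-1)$ from each side of $\underline{p_{G_i}} \le p_i^d(k) \le \overline{p_{G_i}}$ yields
\begin{equation*}
\underline{p_{G_i}} - p_{G_i}(k-1) \le p_i^d(k) - p_{G_i}(k-1) \le \overline{p_{G_i}} - p_{G_i}(k-1),
\end{equation*}
which is precisely $\underline{\Delta p_{G_i}}(k) \le \Delta p_{G_i}(k) \le \overline{\Delta p_{G_i}}(k)$ by the definitions of $\underline{\Delta p_{G_i}}(k)$ and $\overline{\Delta p_{G_i}}(k)$ given after \eqref{eq47}.

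Finally, for the tracking claim I would substitute $u_i(k) = p_i^d(k) - p_{G_i}(k-1)$ directly into the net-power relation \eqref{eq37}, obtaining $p_i(k) = p_{G_i}(k-1) + u_i(k) = p_i^d(k)$. Hence the coordination error \eqref{eq42} is identically zero — equivalently, the choice \eqref{eq44} forces every power flow $p_{F_{ji}}(k)$ to vanish — so $p_i(k) \to p_i^d(k)$ holds trivially, in fact in a single step. I would also remark that, since $[\underline{p_{G_i}},\overline{p_{G_i}}] \subseteq [\underline{p_i},\overline{p_i}]$ by \emph{Assumption \ref{assump}}, the net-power capacity constraint \eqref{eq1} is automatically satisfied as well.

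Since the statement is essentially a corollary of \emph{Theorem \ref{thm1}} and the first lemma, there is no genuine obstacle; the only point demanding care is to make explicit that the equivalence asserted in \emph{Theorem \ref{thm1}} transfers the box constraint $\underline{p_{G_i}} \le p_i^d(k) \le \overline{p_{G_i}}$ to the output of the consensus scheme — this is exactly what legitimizes the direct, one-step control law and explains why no inter-node power exchange is needed in the ``with power coordination'' scenario.
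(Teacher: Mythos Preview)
Your proposal is correct and follows essentially the same route as the paper: establish the box constraint $\underline{p_{G_i}}\le p_i^d(k)\le\overline{p_{G_i}}$ via the coordination scheme, subtract $p_{G_i}(k-1)$ to obtain \eqref{eq47}, and substitute \eqref{eq44} into \eqref{eq37} to get $p_i(k)=p_i^d(k)$. One small imprecision: \eqref{eq44} together with \eqref{eq38} only forces the \emph{net} inflow $\sum_{j\in N_i}p_{F_{ji}}(k)$ to vanish at each node, not each individual $p_{F_{ji}}(k)$; the paper handles this by explicitly \emph{choosing} $p_{F_{ji}}(k)=0$ as part of the control design.
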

\begin{proof}
With the power coordination \eqref{eq23}-\eqref{eq27},
${\underline {p_{G_{i}}}} \le p_{i}^d\left( k \right) \le
{\overline {p_{G_{i}}}}$ can be achieved. Thus, we can have
\begin{equation}
\underline {{\Delta p_{{G_{i}}}}} \left( k \right) \le p_{{i}}^d\left( k
\right) - {p_{{G_i}}}\left( {k - 1} \right) \le \overline
{{\Delta p_{{G_{i}}}}} \left( k \right) \label{eq45}
\end{equation}
In \eqref{eq38}, choosing $p_{F_{ji}}\left( k \right)=0$ and by the control law of \eqref{eq44}, we have ${u_i}\left( k \right) = {\Delta p_{{G_{i}}}}\left( k \right)$. Therefore, by \eqref{eq45}, the constraint \eqref{eq47} is satisfied. Furthermore, by inserting \eqref{eq44} into \eqref{eq37}, we can achieve ${p_i}\left( k \right) \to p_{{i}}^d\left( k
\right)$.  
\end{proof}

%\begin{remark}
%From the above theorem, with the power coordination, ${u_i}\left( k \right)$
%can be simply designed by using only self-loop control input as
%follows:
%\begin{equation}
%u_{i}\left( k \right) = u_{ii}\left( k \right) = p_{F_{ii}}\left( k
%\right) =  p_{i}^d\left( k \right) - {p_{{G_i}}}\left( {k - 1}
%\right) \label{eq46}
%\end{equation}
%\end{remark}

\begin{figure*}[t]
\centering \includegraphics [scale=0.9]{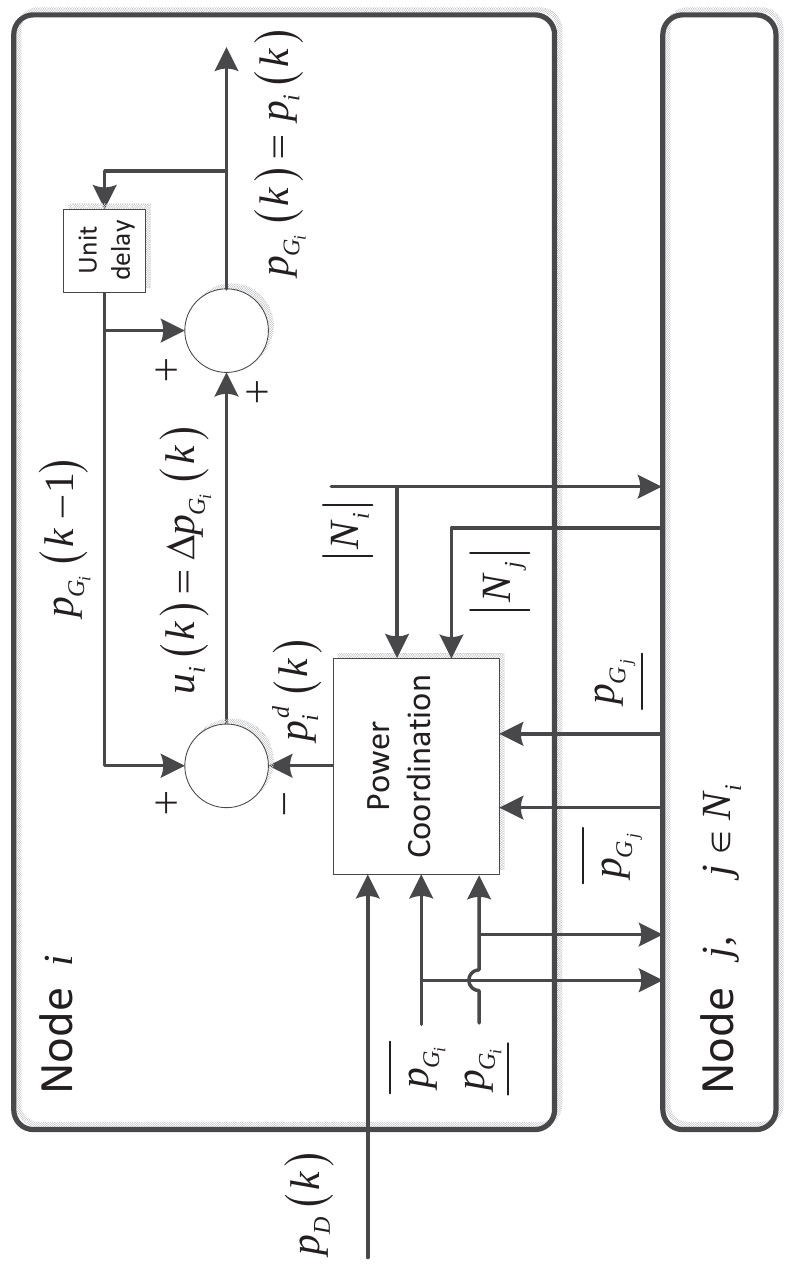} \caption{Power
generation control with power coordination} \label{fig3}
\end{figure*}

Fig. \ref{fig3} shows the overall power flow control with power coordination.
Without loss of generality, it is assumed that node $i$ is a leading
node that knows the total demand ${p_D}\left( k \right)$. Only the
information of generation capacity and the degree of node $i$ are exchanged
between neighboring nodes. With this information, the desired
net power for each node is determined by the power coordination.
After the power coordination,
power generation control input is given by \eqref{eq44}. As a
consequence of the control input, ${p_i}\left( k \right) \to
p_{i}^d\left( k \right)$ is achieved. In this case,
${p_i}\left(k\right) = {p_{G_i}}\left(k\right)$ because there is no
power flow between pairs of nodes.

%%%%%%%%%%%%%%%%%%%%%%%%%%%%%%%%%%%%%%%%%%%%%%%%%%%%%%%%%%%%%%%%%%%%%%%%%%%%%%%%
\subsubsection{Without power coordination}
Let us assume that the desired net power for each node is individually given
without power coordination. But it is supposed that the desired net power satisfies the realizability
condition \eqref{eq12} and net power capacity constraints
\eqref{eq1}. In this case, the desired net power for some nodes may not
satisfy the generation capacity of their node. Thus, it is not
sufficient to provide only the power generation control for each node. Therefore,
%power flows between pairs of nodes are
%needed. Then, 
the power flow control input can be represented as
\eqref{eq38}, with $p_{F{ji}}(k)$ enabled in this case. 
%In this process, 
First, we want to design the power generation
control to achieve the overall supply-demand balance. 
%Next, we design the net
%power flows among nodes to make each node's net power
%converge to its desired net power. 
Now, we provide a lemma to derive the main result of this paper.
\begin{lem}
If the power generation control input is designed by the following law 
\begin{align}
{\Delta p_{{G_{i}}}}\!\left( k \right) \!&=\!\underline {{\Delta p_{{G_{i}}}}}\!\left(
k \right) \!+\! \frac{{\overline {{\Delta p_{{G_{i}}}}}\!\left( k \right) \!-\!
\underline {{\Delta p_{{G_{i}}}}}\!\left( k \right)}}{{\sum\limits_{j =
1}^n {\left(\! {\overline {{\Delta p_{{G_{j}}}}}\!\left( k \right) \!-\!
\underline {{\Delta p_{{G_{j}}}}}\!\left( k \right)}\!\right)} }} \left(\!{{p_D}\!\left( k \right) \!-\! {p_G}\!\left( {k - 1} \right) \!-\! \sum\limits_{j = 1}^n \!{\underline {{\Delta p_{{G_{j}}}}}\!\left( k \right)}
} \!\right)\label{eq48}
\end{align}
then, the
control input will satisfy the constraint \eqref{eq47} and the supply-demand balance is achieved.
\end{lem}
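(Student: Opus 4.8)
The plan is to mirror the proof of the earlier lemma on the power coordination rule \eqref{eq15}, exploiting the observation that the right-hand side of \eqref{eq48} is once again an affine allocation of a single scalar ``slack'' quantity, distributed across nodes in proportion to their individual generation ranges.

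First I would collapse the two aggregate expressions that appear in \eqref{eq48}. Since $\overline{\Delta p_{{G_i}}}\left(k\right) - \underline{\Delta p_{{G_i}}}\left(k\right) = \left(\overline{p_{{G_i}}} - p_{{G_i}}\left(k-1\right)\right) - \left(\underline{p_{{G_i}}} - p_{{G_i}}\left(k-1\right)\right) = \overline{p_{{G_i}}} - \underline{p_{{G_i}}}$, the denominator in \eqref{eq48} reduces to $\sum_{j=1}^n\left(\overline{p_{{G_j}}} - \underline{p_{{G_j}}}\right) = \overline{p_G} - \underline{p_G}$. Likewise $\sum_{j=1}^n \underline{\Delta p_{{G_j}}}\left(k\right) = \underline{p_G} - p_G\left(k-1\right)$, so the parenthesised factor in \eqref{eq48} becomes $p_D\left(k\right) - p_G\left(k-1\right) - \left(\underline{p_G} - p_G\left(k-1\right)\right) = p_D\left(k\right) - \underline{p_G}$. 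Hence \eqref{eq48} is equivalent to $\Delta p_{{G_i}}\left(k\right) = \underline{\Delta p_{{G_i}}}\left(k\right) + \frac{\overline{p_{{G_i}}} - \underline{p_{{G_i}}}}{\overline{p_G} - \underline{p_G}}\left(p_D\left(k\right) - \underline{p_G}\right)$, which has exactly the structure of \eqref{eq15}.

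Next I would verify the bound \eqref{eq47}. By the realizability condition \eqref{eq12}, $0 \le p_D\left(k\right) - \underline{p_G} \le \overline{p_G} - \underline{p_G}$, so the scalar multiplier $\mu \triangleq \left(p_D\left(k\right) - \underline{p_G}\right)/\left(\overline{p_G} - \underline{p_G}\right)$ lies in $\left[0,1\right]$, and $\overline{p_{{G_i}}} - \underline{p_{{G_i}}} \ge 0$. Therefore the correction term added to $\underline{\Delta p_{{G_i}}}\left(k\right)$ is nonnegative (giving the left inequality of \eqref{eq47}) and is no larger than $\overline{\Delta p_{{G_i}}}\left(k\right) - \underline{\Delta p_{{G_i}}}\left(k\right)$ (giving the right inequality). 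Finally, summing \eqref{eq48} over $i$ from $1$ to $n$ and using the two aggregate identities already derived gives $\sum_{i=1}^n \Delta p_{{G_i}}\left(k\right) = \left(\underline{p_G} - p_G\left(k-1\right)\right) + \mu\left(\overline{p_G} - \underline{p_G}\right) = p_D\left(k\right) - p_G\left(k-1\right)$, so that $p_G\left(k\right) = p_G\left(k-1\right) + \sum_{i=1}^n \Delta p_{{G_i}}\left(k\right) = p_D\left(k\right)$, which is the supply-demand balance \eqref{eq11}.

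The only genuinely delicate part is the bookkeeping in the first step that converts the unwieldy form \eqref{eq48} into the clean proportional form; once that reduction is in hand, both conclusions follow by the same two-sided estimate and the same summation argument used for \eqref{eq15}. I would also flag the implicit nondegeneracy hypothesis $\overline{p_G} > \underline{p_G}$ (equivalently, at least one generator with a nontrivial range), without which \eqref{eq48} is not well defined and the control problem is vacuous.
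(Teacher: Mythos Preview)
Your argument is correct, and in fact cleaner than the paper's own proof. The paper works directly with the incremental quantities $\overline{\Delta p_{G_i}}(k)$, $\underline{\Delta p_{G_i}}(k)$ throughout: it computes $\underline{\Delta p_{G_i}}(k)-\Delta p_{G_i}(k)$ and $\Delta p_{G_i}(k)-\overline{\Delta p_{G_i}}(k)$ separately and, to conclude each is nonpositive, rewrites the bracketed factor $p_D(k)-p_G(k-1)-\sum_j\underline{\Delta p_{G_j}}(k)$ as $\sum_j\bigl(\Delta p_{G_j}(k)-\underline{\Delta p_{G_j}}(k)\bigr)$ (and similarly for the upper bound). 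That substitution already uses the supply--demand identity $\sum_j\Delta p_{G_j}(k)=p_D(k)-p_G(k-1)$, which the paper only states at the end of the proof, so the logical ordering there is a bit awkward. Your route---collapsing $\overline{\Delta p_{G_i}}(k)-\underline{\Delta p_{G_i}}(k)$ to the constant $\overline{p_{G_i}}-\underline{p_{G_i}}$ and the bracket to $p_D(k)-\underline{p_G}$, then invoking \eqref{eq12} to get $\mu\in[0,1]$---exposes the structural identity with \eqref{eq15} and avoids that forward reference entirely. The nondegeneracy remark $\overline{p_G}>\underline{p_G}$ is a fair point; the paper leaves it implicit.
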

\begin{proof}
For the left-side inequality $\underline {{\Delta p_{{G_{i}}}}} \left( k
\right) \le {{\Delta p_{{G_{i}}}}}\left( k \right)$ of \eqref{eq47}, if
${{\Delta p_{{G_{i}}}}}\left( k \right)$ is given by \eqref{eq48}, then we
can obtain the following inequality: % \eqref{eq49},\eqref{eq50}. 
\begin{align}
\underline {{\Delta p_{{G_{i}}}}} \left( k \right) - {{\Delta p_{{G_{i}}}}}\left(
k \right) &= - \frac{{\overline {{\Delta p_{{G_{i}}}}}} \left( k \right) -
\underline {{\Delta p_{{G_{i}}}}} \left( k \right)}{\sum\limits_{j =
1}^n \left( {\overline {{\Delta p_{{G_{j}}}}}} \left( k \right) -
\underline {{\Delta p_{{G_{j}}}}} \left( k \right) \right)}  \left(
{{p_D}\left( k \right) - {p_G}\left( {k - 1} \right) -
\sum\limits_{j = 1}^n {\underline {{\Delta p_{{G_{j}}}}}} \left( k \right)}
 \right) \\  
&= \frac{{\overline {{\Delta p_{{G_{i}}}}}} \left( k \right) \!-\! \underline
{{\Delta p_{{G_{i}}}}} \left( k \right)}{{\sum\limits_{j = 1}^n \!{\left(\!
{\overline {{\Delta p_{{G_{j}}}}} \left( k \right) \!-\! \underline
{{\Delta p_{{G_{j}}}}} \left( k \right)} \!\right)} }}\!\sum\limits_{j = 1}^n\!
{\left(\! {\underline {{\Delta p_{{G_{j}}}}} \left( k \right) \!-\!
{{\Delta p_{{G_{j}}}}}\left( k \right)} \!\right)}  \!\!\le\! 0
 \label{eq50}
\end{align}

For the right-side inequality
${{\Delta p_{{G_{i}}}}}\left( k \right) \le \overline {{\Delta p_{{G_{i}}}}}
\left( k \right)$, if ${{\Delta p_{{G_{i}}}}}\left( k \right)$ is given by
\eqref{eq48},  
with the substitution of $\xi  \triangleq \sum\limits_{j = 1}^n {[ {\overline {{\Delta p_{{G_{i}}}}} \left( k \right) - \underline {{\Delta p_{{G_{i}}}}} \left( k \right)} ]} [ {\underline {{\Delta p_{{G_{j}}}}} \left( k \right) - \overline {{\Delta p_{{G_{j}}}}} \left( k \right)} ] + [ {\overline {{\Delta p_{{G_{i}}}}} \left( k \right)} { - \underline {{\Delta p_{{G_{i}}}}} \left( k \right)} ] [{p_D}(k) $ $  - {p_G}(k - 1) - \sum\limits_{j = 1}^n {\underline {{\Delta p_{{G_{j}}}}} \left( k \right)} ]$,
we can obtain the following inequality:
\begin{align}
{{\Delta p_{{G_{i}}}}}\left( k \right) - \overline {{\Delta p_{{G_{i}}}}} \left( k
\right)  &= \underline {{\Delta p_{{G_{i}}}}} \left( k \right) - \overline
{{\Delta p_{{G_{i}}}}} \left( k \right)
+ \frac{{\overline {{\Delta p_{{G_{i}}}}} \left( k \right) - \underline {{\Delta p_{{G_{i}}}}} \left( k \right)}}{{\sum\limits_{j = 1}^n {\left( {\overline {{\Delta p_{{G_{j}}}}} \left( k \right) - \underline {{\Delta p_{{G_{j}}}}} \left( k \right)} \right)} }} \nonumber\\
&~~~\times \left( {{p_D}\left( k \right) - {p_G}\left( {k - 1} \right) - \sum\limits_{j = 1}^n {\underline {{\Delta p_{{G_{j}}}}} \left( k \right)} } \right) \nonumber\\
&= \frac{ \xi }
{\sum\limits_{j = 1}^n {\left( {\overline {{\Delta p_{{G_{j}}}}} \left( k \right) - \underline {{\Delta p_{{G_{j}}}}} \left( k \right)} \right)} } \nonumber\\ %\label{eq52}\\
&= \frac{{\overline {{\Delta p_{{G_{i}}}}} \left( k \right) \!-\! \underline
{{\Delta p_{{G_{i}}}}} \left( k \right)}}{{\sum\limits_{j = 1}^n \!{\left(\!
{\overline {{\Delta p_{{G_{j}}}}} \left( k \right) \!-\! \underline
{{\Delta p_{{G_{j}}}}} \left( k \right)} \!\right)} }}\!\sum\limits_{j = 1}^n\!
{\left(\! {{\Delta p_{{G_{j}}}}\left( k \right) \!-\! \overline {{\Delta p_{{G_{j}}}}}
\left( k \right)}\! \right)}  \!\!\le\! 0 \label{eq54}
\end{align}
%Thus, by \eqref{eq50} and \eqref{eq54}, the proof is completed. 
%If ${\Delta p_{{G_{i}}}}\left( k \right)$ is given by \eqref{eq48}, then we
%can obtain $\underline {{\Delta p_{{G_{i}}}}} \left( k \right) - {\Delta p_{{G_{i}}}}\left(
%k \right) \le 0$ and ${\Delta p_{{G_{i}}}}\left( k \right) - \overline {{\Delta p_{{G_{i}}}}} \left( k
%\right)  \le 0$. Thus, the constraint \eqref{eq47} is satisfied.
Furthermore, summing up the both sides of \eqref{eq48} over $i$ from $1$ to $n$ yields
\begin{align}
{p_D}\left( k \right) = {p_G}\left( {k - 1} \right) + \sum\limits_{i = 1}^n {\Delta {p_{G_i}}\left( k \right)}  = {p_G}\left( k \right) \label{eq53}
\end{align}
Thus, the proof is completed.  
\end{proof}

However, as in \eqref{eq15}, \eqref{eq48}
requires global information such as $\sum\limits_{i = 1}^n {\underline
{{\Delta p_{{G_{i}}}}} \left( k \right)}$, $\sum\limits_{i = 1}^n {\overline
{{\Delta p_{{G_{i}}}}} \left( k \right)}$, and ${p_D}\left( k \right) - {p_G}\left( {k - 1} \right)$. For a decentralized power generation control, we now provide the following theorem:
\begin{theorem}
The power generation control input \eqref{eq48} can be achieved if the
following consensus scheme is used.
\begin{align}
{\Delta p_{{G_{i}}}}\!\left( k \right) \!&=\! \underline {{\Delta p_{{G_{i}}}}}\! \left(
k \right) \!+\! \left(\! {\frac{{\overline {{\Delta p_{{G_{i}}}}}\! \left( k
\right) \!-\! \underline {{\Delta p_{{G_{i}}}}}\! \left( k \right)}}
{{w_{i,ss}}}}\! \right)z_{i,ss}\label{eq55}
\end{align}
where $w_{i,ss},z_{i,ss}$ are steady state solutions of the following
equations:
\begin{align}
&{z_i}\left( {t + 1} \right) = \frac{1} {{1 + {\left| {{N_i}}
\right|} }}{z_i}\left( t \right) + \sum\limits_{j \in {N_i}}
{\frac{1}
{{1 + {\left| {{N_j}} \right|} }}{z_j}\left( t \right)} \label{eq56} \\
&{z_i}\left( 0 \right) = p_{i}^d\left( k \right) - {p_G}_i\left( {k - 1} \right) - \underline {{\Delta p_{{G_{i}}}}} \left( k \right) \label{eq57} \\
&{w_i}\left( {t + 1} \right) = \frac{1} {{1 + {\left| {{N_i}}
\right|} }}{w_i}\left( t \right) + \sum\limits_{j \in {N_i}}
{\frac{1}
{{1 + {\left| {{N_j}} \right|} }}{w_j}\left( t \right)} \label{eq58} \\
&{w_i}\left( 0 \right) = \overline {{\Delta p_{{G_{i}}}}} \left( k \right)
- \underline {{\Delta p_{{G_{i}}}}} \left( k \right) \label{eq59}
\end{align}
\end{theorem}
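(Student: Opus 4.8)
The plan is to carry the argument from the proof of \emph{Theorem \ref{thm1}} over essentially verbatim, since \eqref{eq56}--\eqref{eq59} is the same two-channel averaging recursion driven by the same weight matrix. First I would stack the scalar iterations \eqref{eq56} and \eqref{eq58} into the vector recursions $z(t+1) = Qz(t)$ and $w(t+1) = Qw(t)$, where $Q = [q_{ij}]$ is exactly the matrix of \eqref{eq28}, i.e. $q_{ij} = 1/(1+|N_j|)$ for $j \in N_i \cup \{i\}$ and $q_{ij} = 0$ otherwise. As already established there, $Q$ is nonnegative, irreducible (the interconnection graph is undirected and connected by \emph{Assumption \ref{assump}}), and primitive with $\rho(Q) = 1$, and its left Perron eigenvector is $l = {\bf{1}}$.

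Next I would apply \emph{Lemma \ref{lem3}}: $\lim_{m\to\infty} Q^m = r{\bf{1}}^T$ with $r > 0$ element-wise and $r^T{\bf{1}} = 1$, so the iterations converge to $z_{ss} = ({\bf{1}}^T z(0))\,r$ and $w_{ss} = ({\bf{1}}^T w(0))\,r$. Summing the initial conditions \eqref{eq57} and \eqref{eq59} and using the standing supply-demand balance $\sum_{i=1}^n p_i^d(k) = p_D(k)$ from \eqref{eq11} together with $\sum_{i=1}^n p_{G_i}(k-1) = p_G(k-1)$, I obtain
\[
{\bf{1}}^T z(0) = p_D(k) - p_G(k-1) - \sum_{j=1}^n \underline{\Delta p_{G_j}}\left(k\right), \qquad {\bf{1}}^T w(0) = \sum_{j=1}^n\!\left(\overline{\Delta p_{G_j}}\left(k\right) - \underline{\Delta p_{G_j}}\left(k\right)\right).
\]
Hence $z_{i,ss}$ and $w_{i,ss}$ equal these two scalars each scaled by the common component $r_i$.

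Finally I would substitute $z_{i,ss}$ and $w_{i,ss}$ into \eqref{eq55}. The per-node Perron weight $r_i$ cancels between the numerator $z_{i,ss}$ and the denominator $w_{i,ss}$, leaving precisely the centralized law \eqref{eq48}; the preceding lemma then delivers the generation-capacity constraint \eqref{eq47} and the supply-demand balance $p_G(k) = p_D(k)$.

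Since all the spectral machinery is inherited from \emph{Theorem \ref{thm1}}, there is no real obstacle here; the only points to check are that the hypothesis $\sum_{i=1}^n p_i^d(k) = p_D(k)$ is genuinely available in the ``without power coordination'' setting (it is, being the supply-demand requirement \eqref{eq11}) and that $w_{i,ss} \neq 0$ so that \eqref{eq55} is well defined, which holds because $r_i > 0$ and $\sum_{j=1}^n\!\left(\overline{\Delta p_{G_j}}\left(k\right) - \underline{\Delta p_{G_j}}\left(k\right)\right) > 0$ whenever at least one generator has a nondegenerate instantaneous range.
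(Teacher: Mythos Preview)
Your proposal is correct and follows essentially the same route as the paper: you stack the recursions into $z(t+1)=Qz(t)$ and $w(t+1)=Qw(t)$, invoke the Perron--Frobenius/primitivity argument of \emph{Theorem~\ref{thm1}} via \emph{Lemma~\ref{lem3}} to get $z_{ss}=({\bf 1}^Tz(0))r$ and $w_{ss}=({\bf 1}^Tw(0))r$, and then observe that $r_i$ cancels in \eqref{eq55} to recover \eqref{eq48}. Your added remarks on the availability of $\sum_i p_i^d(k)=p_D(k)$ and on $w_{i,ss}\neq 0$ are sensible side checks that the paper leaves implicit.
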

\begin{proof}
The consensus scheme \eqref{eq56} and \eqref{eq57} can be
represented by
\begin{align}
&z\left( {t + 1} \right) = Qz\left( t \right) \label{eq60}\\
&z\left( 0 \right) = {\left[ {{z_1}\left( 0 \right),{z_2}\left( 0 \right), \ldots ,{z_n}\left( 0 \right)} \right]^T} \label{eq61}
\end{align}
%where ${z_i}\left( 0 \right) = p_i^d\left( k \right) - {p_{{G_i}}}\left( {k - 1} \right) - \underline {\Delta {p_{{G_i}}}} \left( k \right)$.
%\begin{align}
%&z\left( {t + 1} \right) = Qz\left( t \right) \label{eq60}\\
%&z\left( 0 \right) = \left[ {\begin{array}{*{20}{c}}
%   {p_{{1}}^d\left( k \right) - {p_{{G_1}}}\left( {k - 1} \right) - \underline {{\Delta p_{{G_{1}}}}} \left( {k} \right)}  \\
%   {p_{{2}}^d\left( k \right) - {p_{{G_2}}}\left( {k - 1} \right) - \underline {{\Delta p_{{G_{2}}}}} \left( {k} \right)}  \\
%    \vdots   \\
%   {p_{{n}}^d\left( k \right) - {p_{{G_n}}}\left( {k - 1} \right) - \underline {{\Delta p_{{G_{n}}}}} \left( {k} \right)}  \\
%\end{array}} \right] \label{eq61}
%\end{align}
As in the proof of \emph{Theorem \ref{thm1}}, 
%the matrix $Q$ is row
%stochastic and primitive because it is irreducible and has exactly
%one eigenvalue of maximum modulus $\lambda  = \rho \left( Q \right)
%= 1$ according to \emph{Perron-Frobenius
%theorem} \cite{horn1990matrix}. Thus, according to \emph{Lemma \ref{lem3}}, 
the steady
state solution is given by ${z_{ss}} = r{l^T}z\left( 0 \right)$
where $Qr = \rho \left( Q \right)r,{Q^T}l = \rho \left( Q \right)l,r
> 0,l > 0$ in element-wise, and ${r^T}l = 1$ with $l = {\bf{1}}$ and $r = {\left[
{{r_1},{r_2}, \ldots ,{r_n}} \right]^T}$ which satisfies
${r^T}{\bf{1}} = 1$. Thus, this solution is given by
\begin{align}
\!\!\!\!{z_{ss}} \!=\! r{l^T}z\!\left( 0 \right)\!
%&= \sum\limits_{i = 1}^n {\left( {p_{i}^d\left( k \right) - {p_{{G_i}}}\left( {k - 1} \right) - \underline {{\Delta p_{{G_{i}}}}} \left( k \right)} \right)} r \label{eq63} \\
=\!\! \left(\! {{p_D}\!\left( k \right) \!-\! {p_G}\!\left( {k - 1} \right) \!-\!
\sum\limits_{i = 1}^n {\underline {{\Delta p_{{G_{i}}}}} \!\left( k \right)}
} \!\!\right)r \label{eq64}\!\!\!
\end{align}

In the similar manner, the steady state solution for
\begin{align}
&w\left( {t + 1} \right) = Qw\left( t \right) \label{eq65}\\
&w\left( 0 \right) = {\left[ {{w_1}\left( 0 \right),{w_2}\left( 0 \right), \ldots ,{w_n}\left( 0 \right)} \right]^T} \label{eq66}
\end{align}
%where ${w_i}\left( 0 \right) = \overline {\Delta {p_{{G_i}}}} \left( k \right) - \underline {\Delta {p_{{G_i}}}} \left( k \right)$
is given by
\begin{align}
{w_{ss}} \!=\! r{l^T}w\!\left( 0 \right)\!
=\! \sum\limits_{i = 1}^n {\left(\! {\overline {{\Delta p_{{G_{i}}}}}\! \left(
k \right) \!-\! \underline {{\Delta p_{{G_{i}}}}} \!\left( k \right)} \!\right)} r
\label{eq68}
\end{align}

Thus, \eqref{eq55} can be represented by the following equation:
\begin{align}
{\Delta p_{{G_{i}}}}\left( k \right) &= \underline {{\Delta p_{{G_{i}}}}} \left(
k \right) + \left( {\frac{{\overline {{\Delta p_{{G_{i}}}}} \left( k
\right) - \underline {{\Delta p_{{G_{i}}}}} \left( k \right)}}
{{{w_{i,ss}}}}} \right){z_{i,ss}}\nonumber\\
&= \underline {{\Delta p_{{G_{i}}}}} \left( k \right) + \frac{{\overline
{{\Delta p_{{G_{i}}}}} \left( k \right) - \underline {{\Delta p_{{G_{i}}}}}
\left( k \right)}}
{{\sum\limits_{i = 1}^n {\left( {\overline {{\Delta p_{{G_{i}}}}} \left( k \right) - \underline {{\Delta p_{{G_{i}}}}} \left( k \right)} \right)} {r_i}}} \left( {{p_D}\left( k \right) - {p_G}\left( {k - 1} \right) - \sum\limits_{i = 1}^n {\underline {{\Delta p_{{G_{i}}}}} \left( k \right)} } \right){r_i}\nonumber\\
&= \underline {{\Delta p_{{G_{i}}}}} \left( k \right) + \frac{{\overline
{{\Delta p_{{G_{i}}}}} \left( k \right) - \underline {{\Delta p_{{G_{i}}}}}
\left( k \right)}} {{\sum\limits_{i = 1}^n {\left( {\overline
{{\Delta p_{{G_{i}}}}} \left( k \right) - \underline {{\Delta p_{{G_{i}}}}}
\left( k \right)} \right)} }}\left( {{p_D}\left( k \right) -
{p_G}\left( {k - 1} \right) - \sum\limits_{i = 1}^n {\underline
{{\Delta p_{{G_{i}}}}} \left( k \right)} } \right)\label{eq71}
\end{align}
which is equivalent to \eqref{eq48}.
%Thus, it follows from \eqref{eq64} and \eqref{eq68} that \eqref{eq55} 
%can be represented by the following equation:
%\begin{align}
%{\Delta p_{{G_{i}}}}\left( k \right) 
%&= \underline {{\Delta p_{{G_{i}}}}} \left(
%k \right) + \left( {\frac{{\overline {{\Delta p_{{G_{i}}}}} \left( k
%\right) - \underline {{\Delta p_{{G_{i}}}}} \left( k \right)}}
%{{{w_{i,ss}}}}} \right){z_{i,ss}}\label{eq69}\\
%&= \underline {{\Delta p_{{G_{i}}}}} \left( k \right) + \frac{{\overline
%{{\Delta p_{{G_{i}}}}} \left( k \right) - \underline {{\Delta p_{{G_{i}}}}}
%\left( k \right)}}
%{{\sum\limits_{i = 1}^n {\left( {\overline {{\Delta p_{{G_{i}}}}} \left( k \right) - \underline {{\Delta p_{{G_{i}}}}} \left( k \right)} \right)} {r_i}}}  \nonumber\\
%&~~\times\left( {{p_D}\left( k \right) - {p_G}\left( {k - 1} \right) - \sum\limits_{i = 1}^n {\underline {{\Delta p_{{G_{i}}}}} \left( k \right)} } \right){r_i}\label{eq70}\\
%&= \underline {{\Delta p_{{G_{i}}}}} \left( k \right) + \frac{{\overline
%{{\Delta p_{{G_{i}}}}} \left( k \right) - \underline {{\Delta p_{{G_{i}}}}}
%\left( k \right)}} {{\sum\limits_{i = 1}^n {\left( {\overline
%{{\Delta p_{{G_{i}}}}} \left( k \right) - \underline {{\Delta p_{{G_{i}}}}}
%\left( k \right)} \right)} }} \nonumber\\
%&~~\times\left( {{p_D}\left( k \right) -
%{p_G}\left( {k - 1} \right) - \sum\limits_{i = 1}^n {\underline
%{{\Delta p_{{G_{i}}}}} \left( k \right)} } \right)\label{eq71}
%\end{align}
%which
%is equivalent to \eqref{eq48}.  
\end{proof}
%\addtocounter{equation}{3} 

Then, from \eqref{eq42}, the coordination error after the
power generation control input is given by
\begin{align}
{p_{e,i}}\left( k \right) 
%&= {p_i}\left( k \right) - p_{i}^d\left( k \right) \label{eq72}\\
%&= {p_{{G_i}}}\left( {k - 1} \right) \!+\! {\Delta p_{{G_{i}}}}\left( k \right) \!+\! \sum\limits_{j \in {N_i}} {{p_{N{F_{ij}}}}\left( k \right)}  \!-\! p_{i}^d\left( k \right) \label{eq73}\\
%&= {p_{{G_i}}}\left( k \right) - p_{i}^d\left( k \right) + \sum\limits_{j \in {N_i}} {{p_{{F_{ji}}}}\left( k \right)} \label{eq74}\\
= {p'_{e,i}}\left( k \right) + \sum\limits_{j \in {N_i}}
{{p_{{F_{ji}}}}\left( k \right)} \label{eq75}
\end{align}
where ${p'_{e,i}}\left( k \right) \buildrel \Delta \over =
{p_{{G_i}}}\left( k \right) - p_{i}^d\left( k \right)$. Now, to make ${p_{e,i}}\left( k \right) \to 0$, we need to design power flow control ${p_{{F_{ji}}}}\left( k \right)$, which is summarized in the following theorem. 

%\begin{figure}[!]
%\centering \includegraphics [scale=0.43]{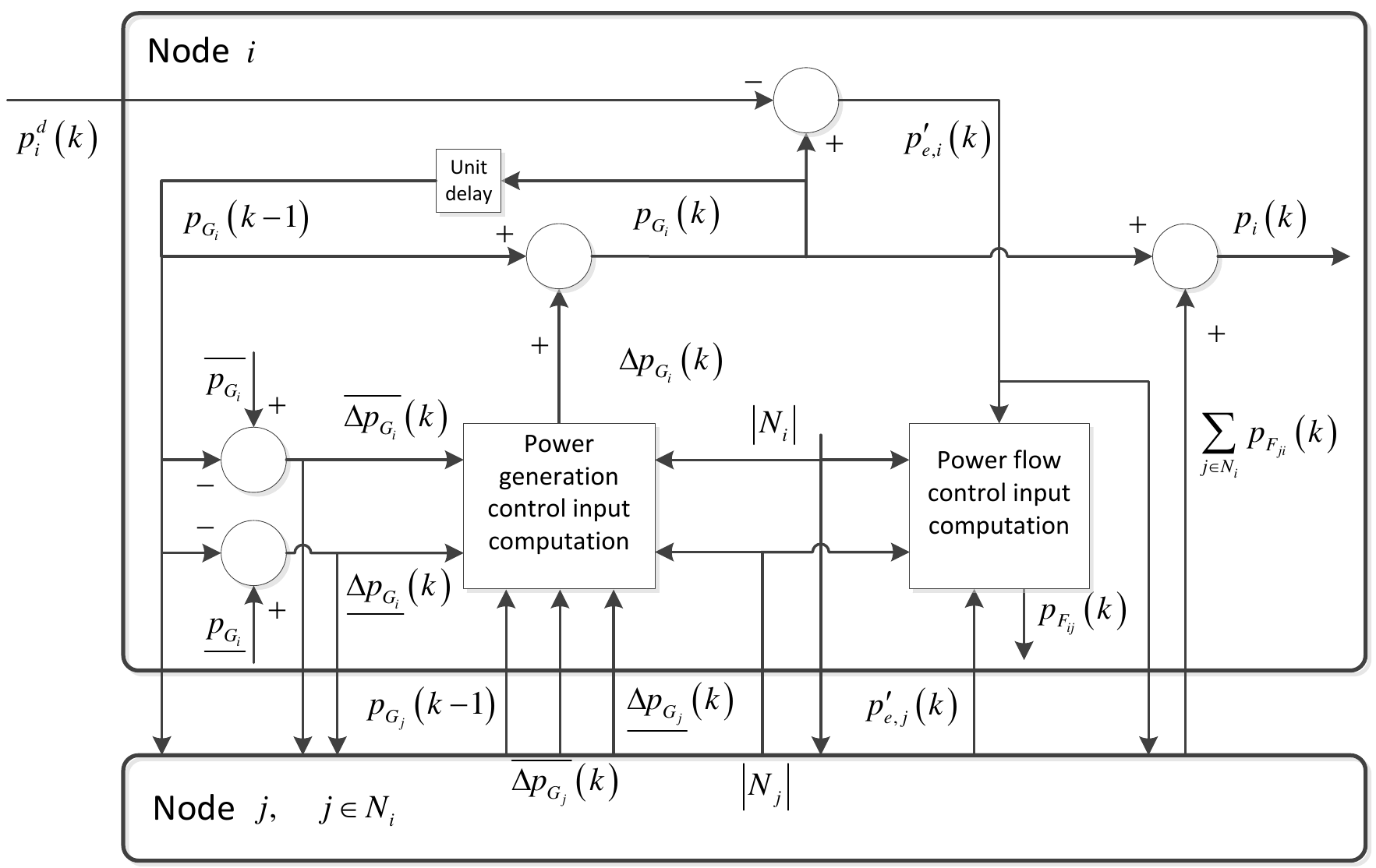} \caption{Power generation and power
%flow control without power coordination} \label{fig4}
%\end{figure}

\begin{theorem}
If the power flow control is designed by
\begin{align}
{p_{{F_{ji}}}}\left( k \right) = h_{ij,ss}
\label{eq76}
\end{align}
where ${h_{ij,ss}}$ is the
steady state solution of the following equations:
\begin{align}
&{h_{ij}}\left( {t + 1} \right) = {h_{ij}}\left( t \right) + {a_{ij}}\left( {{g_j}\left( t \right) - {g_i}\left( t \right)} \right) \label{eq77}\\
&{h_{ij}}\left( 0 \right) = 0 \label{eq78}\\
&{g_i}\left( {t + 1} \right) = {g_i}\left( t \right) + \sum\limits_{j \in {N_i}} {{a_{ij}}\left( {{g_j}\left( t \right) - {g_i}\left( t \right)} \right)} \label{eq79}\\
&{g_i}\left( 0 \right) = {p'_{e,i}}\left( k \right) \label{eq80}
\end{align}
where ${a_{ij}} = \frac{1} {{1 + \max \left\{ {\left| {{N_i}}
\right|,\left| {{N_j}} \right|} \right\}}}$ is Metropolis-Hasting
weight \cite{xiao2007distributed}, then we can have ${p_{e,i}}\left( k \right) \to 0$.
\end{theorem}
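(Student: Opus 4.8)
The plan is to analyse the two auxiliary consensus recursions in sequence: first the vector $g(t)=[g_1(t),\dots,g_n(t)]^T$ governed by \eqref{eq79}--\eqref{eq80}, and then, using its limit, the increments $h_{ij}(t)$ governed by \eqref{eq77}--\eqref{eq78}; the theorem then follows by substitution into \eqref{eq75}.

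First I would observe that \eqref{eq79} is precisely the Metropolis--Hastings average-consensus iteration $g(t+1)=Wg(t)$ with $W=I-\mathcal L$, where $\mathcal L$ is the weighted graph Laplacian formed from the symmetric weights $a_{ij}=a_{ji}=1/(1+\max\{|N_i|,|N_j|\})$. Since the interconnection graph is undirected and connected (\emph{Assumption \ref{assump}}), $W$ is symmetric and doubly stochastic with strictly positive diagonal entries $W_{ii}=1-\sum_{j\in N_i}a_{ij}>0$; hence $W$ is primitive, its eigenvalue $1$ is simple, and every other eigenvalue has modulus strictly less than one. Consequently $g(t)\to\bar g\,\mathbf 1$ geometrically, with $\bar g=\tfrac1n\sum_{i=1}^n g_i(0)=\tfrac1n\sum_{i=1}^n p'_{e,i}(k)$. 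Here the earlier results enter: the power generation control \eqref{eq48} (equivalently \eqref{eq55}) yields $p_G(k)=p_D(k)$ by \eqref{eq53}, and the desired net powers obey the supply--demand balance $\sum_{i=1}^n p_i^d(k)=p_D(k)$ (cf.\ \eqref{eq11}, \eqref{eq12}), so $\sum_{i=1}^n p'_{e,i}(k)=p_G(k)-\sum_{i=1}^n p_i^d(k)=0$ and therefore $g(t)\to 0$ geometrically.

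Next, because $g_j(t)-g_i(t)\to 0$ at a geometric rate, the partial sums $h_{ij}(t)=\sum_{s=0}^{t-1}a_{ij}\bigl(g_j(s)-g_i(s)\bigr)$ — the closed form of \eqref{eq77}--\eqref{eq78} — converge absolutely, so $h_{ij,ss}=\lim_{t\to\infty}h_{ij}(t)$ is well defined (and $h_{ji,ss}=-h_{ij,ss}$, consistent with $p_{F_{ij}}=-p_{F_{ji}}$). The key step is then a discrete conservation law: summing \eqref{eq77} over $j\in N_i$ and using \eqref{eq79} to replace $\sum_{j\in N_i}a_{ij}(g_j(t)-g_i(t))$ by $g_i(t+1)-g_i(t)$ shows that $\sum_{j\in N_i}h_{ij}(t)-g_i(t)$ is independent of $t$. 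Evaluating this invariant at $t=0$ with $h_{ij}(0)=0$ and $g_i(0)=p'_{e,i}(k)$ gives $\sum_{j\in N_i}h_{ij}(t)=g_i(t)-p'_{e,i}(k)$ for all $t$, and letting $t\to\infty$ with $g_i(t)\to 0$ yields $\sum_{j\in N_i}h_{ij,ss}=-p'_{e,i}(k)$.

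Finally, substituting the power flow control $p_{F_{ji}}(k)=h_{ij,ss}$ into \eqref{eq75} gives $p_{e,i}(k)=p'_{e,i}(k)+\sum_{j\in N_i}h_{ij,ss}=0$, which proves the claim. I expect the main obstacle to be not any individual computation but two structural points: (i) justifying that $g(t)$ converges to $0$ rather than to an arbitrary consensus value, which is exactly where the supply--demand balance $\sum_i p_i^d(k)=p_D(k)=p_G(k)$ must be invoked, and (ii) the two-time-scale bookkeeping — the consensus index $t$ is iterated to convergence within a single physical sampling interval, so that $g_i(\infty)$ and $h_{ij,ss}$ are legitimate quantities to feed back as $p_{F_{ji}}(k)$.
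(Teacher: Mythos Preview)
Your proof is correct and follows essentially the same route as the paper: show that the Metropolis--Hastings iteration \eqref{eq79}--\eqref{eq80} drives $g(t)$ to the average, invoke $p_G(k)=p_D(k)=\sum_i p_i^d(k)$ to conclude the average is zero, deduce $\sum_{j\in N_i}h_{ij,ss}=-p'_{e,i}(k)$, and substitute into \eqref{eq75}. Your treatment is in fact slightly more careful than the paper's in two respects---you explicitly justify the existence of $h_{ij,ss}$ via geometric convergence of $g(t)$, and you phrase the identity $\sum_{j\in N_i}h_{ij}(t)=g_i(t)-p'_{e,i}(k)$ as a conserved quantity rather than simply asserting it at steady state---but these are refinements of the same argument, not a different one.
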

\begin{proof}
From \eqref{eq79} and \eqref{eq80}, we can obtain
\begin{align}
&{g}\left( {t + 1} \right) = S{g}\left( t \right) \label{eq81}\\
&{g}\left( 0 \right) = {p'_{e}}\left( k \right) \label{eq82}
\end{align}
where ${g}\left( t \right) = {\left[ {{g_{1}}\left( t
\right),{g_{2}}\left( t \right), \ldots ,{g_{n}}\left( t \right)}
\right]^T}$ and $S = \left[ {{s_{ij}}} \right]$ is doubly stochastic,
where
\begin{equation}
{s_{ij}} = \left\{ {\begin{array}{*{20}{c}}
   {\frac{1}
{{1 + \max \left\{ {\left| {{N_i}} \right|,\left| {{N_j}} \right|} \right\}}}} & {{\text{if}}} & {j \in {N_i}}  \\
   {1 - \sum\limits_{j \in {N_i}} {\frac{1}
{{1 + \max \left\{ {\left| {{N_i}} \right|,\left| {{N_j}} \right|} \right\}}}} } & {{\text{if}}} & {i = j}  \\
   0 & {{\text{otherwise}}} & {}  \\
 \end{array} } \right.\label{eq83}
\end{equation}
and ${S^T} = S$. As in the proof of \emph{Theorem \ref{thm1}},
%the
%matrix $S$ is primitive because it is irreducible and has exactly
%one eigenvalue of maximum modulus $\lambda  = \rho \left( Q \right)
%= 1$ according to \emph{Perron-Frobenius
%theorem} \cite{horn1990matrix}. Thus, according to \emph{Lemma \ref{lem3}}, 
the steady
state solution is given by ${g_{ss}} = r{l^T}{g}\left( 0 \right)$
where $Sr = \rho \left( S \right)r,{S^T}l = \rho \left( S \right)l,r
> 0,l > 0$ in element-wise, and ${r^T}l = 1$ with $l = {\bf{1}}$ and $r$ which
satisfies ${r^T}{\bf{1}} = 1$. Furthermore, {\bf{1}} is also the right
eigenvector with the associated-eigenvalue $1$ because $S$ is doubly
stochastic. Thus, without loss of generality, let $r =
\frac{1}{n}{\mathbf{1}}$, which satisfies ${r^T}{\bf{1}} = 1$. Then,
the solution of \eqref{eq81} with \eqref{eq82} converges to the
average as follows:
\begin{equation}
{g_{ss}} = r{l^T}{g}\left( 0 \right) = \frac{1}{n}{\mathbf{1}}{{\mathbf{1}}^T}{p'_e}\left( k \right)
\label{eq84}
\end{equation}
From ${p'_{e,i}}\left( k \right) \buildrel \Delta \over =
{p_{{G_i}}}\left( k \right) - p_{i}^d\left( k \right)$,
%since the following relationship is true
%\begin{align}
%\sum\limits_{i = 1}^n {{{p}_{e,i}}\left( k \right)}
%= \sum\limits_{i = 1}^n {\left[ {{{p'}_{e,i}}\left( k \right) + \sum\limits_{j \in {N_i}} {{p_{{F_{ji}}}}\left( k \right)} } \right]}\label{eq_pei_prime}
%\end{align} 
%we can obtain $\sum\limits_{i = 1}^n {{p'_{e,i}}} \left( k \right) = 0
%$ because of ${p_G}\left( k \right) = {p_D}\left( k \right)$. 
%%and
%%$\sum\limits_{i = 1}^n {\sum\limits_{j \in {N_i}}
%%{{p_{{F_{ji}}}}\left( k \right)} }  = 0$. 
%Thus,
%%from $\sum\limits_{i = 1}^n {{p'_{e,i}}} \left( k \right)= 0$,
%we have
%\begin{equation}
%{\bf{1}}^T{p'_{e}}\left( k \right) =  0 \label{eq85}
%\end{equation}
%Finally, from \eqref{eq84} and \eqref{eq85}, we can obtain
%${g_{ss}} = {\bf{0}}$. 
we can obtain ${\bf{1}}^T{p'_{e}}\left( k \right) =  0
$ because of ${p_G}\left( k \right) = {p_D}\left( k \right)$. 
Thus, we have ${g_{ss}} = {\bf{0}}$. 
Hence
\begin{equation}
{g_{i,ss}} - {g_i}\left( 0 \right) =  - {p'_{e,i}}\left( k \right)
\label{eq86}
\end{equation}
Also, from \eqref{eq77}-\eqref{eq80}, it follows that
\begin{equation}
{g_{i,ss}} = {g_i}\left( 0 \right) + \sum\limits_{j \in {N_i}}
{{h_{ij,ss}}} \label{eq87}
\end{equation}
Thus, from \eqref{eq86} and \eqref{eq87}, we have
\begin{equation}
\sum\limits_{j \in {N_i}} {{h_{ij,ss}}}  =  - {p'_{e,i}}\left( k
\right) \label{eq88}
\end{equation}
If we choose the interaction control input as \eqref{eq76}, it
follows from \eqref{eq75} and \eqref{eq88} that
%\begin{align}
%{p_{e,i}}\left( k \right) &= {p'_{e,i}}\left( k \right) + \sum\limits_{j \in {N_i}} {{p_{{F_{ji}}}}\left( k \right)} \label{eq89}\\
%&= {p'_{e,i}}\left( k \right) + \sum\limits_{j \in {N_i}}
%{{h_{ij,ss}}}  = 0 \label{eq90}
%\end{align}
\begin{align}
{p_{e,i}}\left( k \right) = {p'_{e,i}}\left( k \right) + \sum\limits_{j \in {N_i}}
{{h_{ij,ss}}}  = 0 \label{eq90}
\end{align}  
\end{proof}

\begin{figure*}[!]
\centering \includegraphics [scale=0.75]{} \caption{Power generation and power
flow control without power coordination} \label{fig4}
\end{figure*}

Fig. \ref{fig4} shows the power generation and power flow control scheme without
power coordination. As previously mentioned, it is assumed that the desired net power for
each node $p_{i}^d\left( k \right)$ is given under \eqref{eq12}.
The power generation control input is given by
\eqref{eq55}-\eqref{eq59}. As a consequence of the control input,
the supply-demand balance is achieved. Next, the power flow between pairs of
nodes is determined by \eqref{eq76}-\eqref{eq80}. Then, after the power flow control,
${p_i}\left( k \right) \to p_{i}^d\left( k \right)$ is
achieved. There are iterations for the power generation control input
computation and for the power flow control input computation. 
%For each control input computation, steady state solution should be obtained.
%Then, two control input computation should be completed in $\Delta
%k$. 
Thus, the time interval for iterations \eqref{eq56}-\eqref{eq59}
and \eqref{eq77}-\eqref{eq80} should be chosen to $\Delta t \ll
\Delta k$ so that the steady state solutions of the iterations can
be obtained in $\Delta k$.

\begin{remark}
The constraints of the \emph{Assumption 2.1} might be time-varying in renewable power resources such as a wind or a solar system.
It is possible for the proposed approach to account for time-varying bounds if the rate of change is not faster than $\Delta k$ and the realization condition \eqref{eq12} is satisfied. This can be easily verified if the time-varying bounds are substituted into the proposed approach instead of the constant bounds.
\end{remark}

%%%%%%%%%%%%%%%%%%%%%%%%%%%%%%%%%%%%%%%%%%%%%%%%%%%%%%%%%%%%%%%%%%%%%%%%%%%%%%%%
%%%%%%%%%%%%%%%%%%%%%%%%%%%%%%%%%%%%%%%%%%%%%%%%%%%%%%%%%%%%%%%%%%%%%%%%%%%%%%%%
%%%%%%%%%%%%%%%%%%%%%%%%%%%%%%%%%%%%%%%%%%%%%%%%%%%%%%%%%%%%%%%%%%%%%%%%%%%%%%%%
\section{Illustrative examples} \label{sec_example}
In this section, two illustrative examples are provided. The
distributed power resources are interconnected as depicted
in Fig. \ref{fig1}. The generation capacity and net power capacity of
each node are listed in Table \ref{tb_1}.
\begin{table}[!htbp]
\begin{center}
\caption{Genearation capacity and net power capacity at each node}
\label{tb_1}  \normalsize 
\begin{tabular}{c|c|c}
\hline
Node$\left(i\right)$ & Generation capacity$\left({p_G}_i\right)$ & Net power capacity$\left({p_i}\right)$ \\
\hline
$1$ & $\left[ {10,50} \right]$ & $\left[ {10,80} \right]$ \\
$2$ & $\left[ {20,80} \right]$ & $\left[ {20,120} \right]$ \\
$3$ & $\left[ {20,40} \right]$ & $\left[ {20,60} \right]$ \\
$4$ & $\left[ {10,45} \right]$ & $\left[ {10,75} \right]$ \\
$5$ & $\left[ {15,60} \right]$ & $\left[ {15,90} \right]$ \\
$6$ & $\left[ {10,55} \right]$ & $\left[ {15,80} \right]$ \\
\hline
\end{tabular}
\end{center}
\end{table}

\subsection{Power distribution with power coordination}
This example shows the power distribution with power coordination. In
this case, without loss of generality, it is assumed that node $1$
is the leading node that knows the total power demand $p_D\left(k\right)$
of the distributed power system. First, the total power demand
$p_D\left(k\right)$ satisfying the realization condition \eqref{eq12} is
randomly created and it is depicted in Fig. \ref{fig5} (a). Then,
$p_{{i}}^d$ is determined by power coordination
\eqref{eq23}-\eqref{eq27} as shown in Fig. \ref{fig5} (b). Next,
the power generation control input given by \eqref{eq44} is shown in Fig.
\ref{fig5} (c) and the generated power (i.e., \eqref{eq8}) is
depicted in Fig. \ref{fig5} (d). Consequently, the coordination error given
by \eqref{eq42} is zero and the
supply-demand balance is also achieved as shown in Fig. \ref{fig5} (b) and Fig. \ref{fig5} (d).

%\begin{figure}[t]
%\centering \includegraphics [scale=0.34]{Simulation/fig3.pdf}
%\caption{power coordination} \label{fig5}
%\end{figure}
%
%
%\begin{figure}[!]
%\centering \includegraphics [scale=0.34]{Simulation/fig4.pdf}
%\caption{Power generation control input and generated power} \label{fig6}
%\end{figure}

\begin{figure}[t]
\centering \includegraphics [scale=0.5]{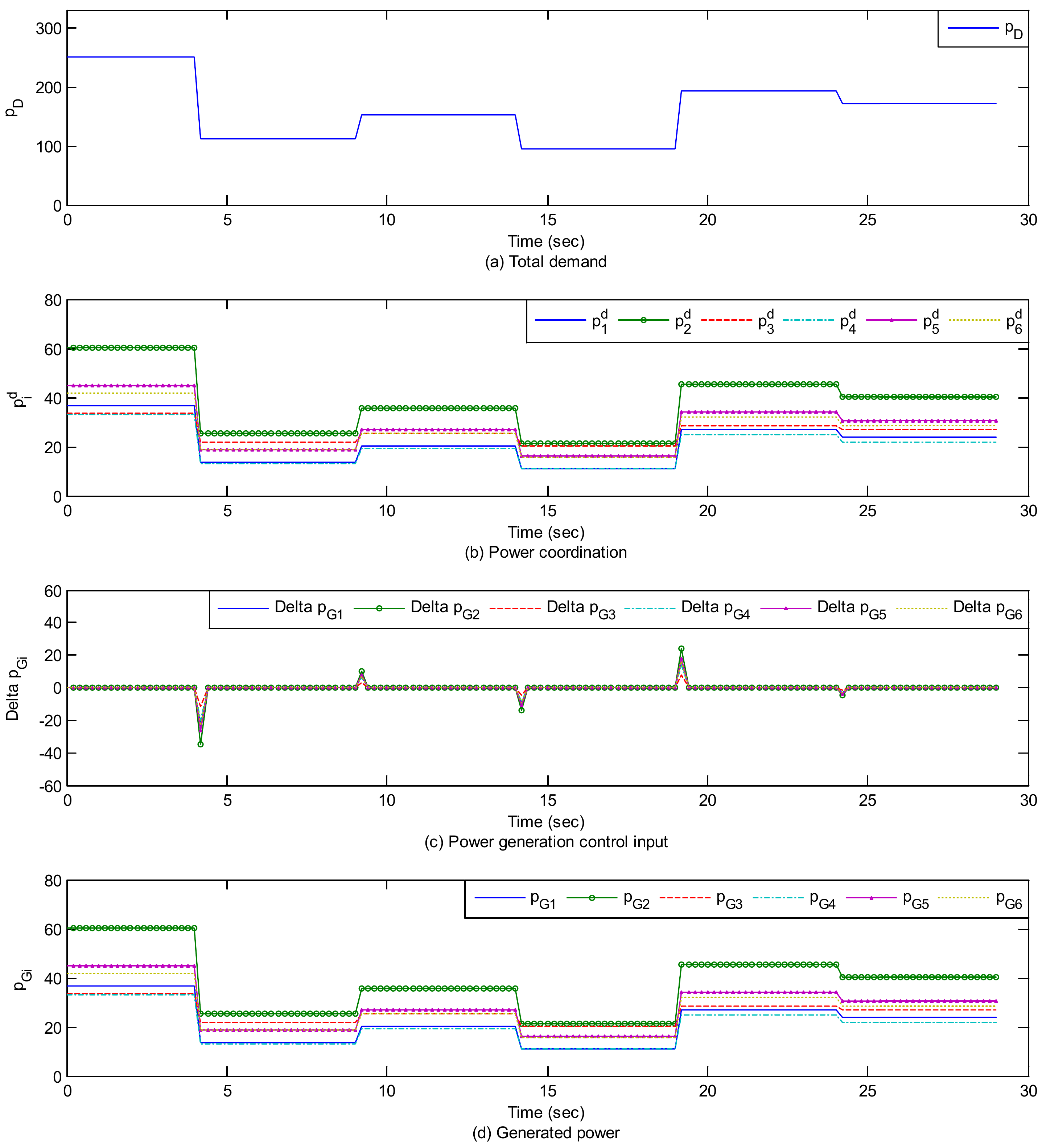}
\caption{Power distribution with power coordination} \label{fig5}
\end{figure}

%\begin{figure}[!]
%\centering \includegraphics [scale=0.355]{Simulation/fig5.pdf}
%\caption{Coordination error and supply-demand balance} \label{fig7}
%\end{figure}

\subsection{Power distribution without power coordination}

This example shows power distribution without power coordination.
In this case, the desired net power for each node is not given by
power coordination but they are randomly created under the realization
condition \eqref{eq12} as depicted in Fig. \ref{fig8} (a). First,
the power generation control input are given by \eqref{eq55}-\eqref{eq59} as shown in
Fig. \ref{fig8} (b) and the generated power (i.e., \eqref{eq8}) is 
shown in Fig. \ref{fig8} (c). After the power generation control,
a coordination of power flows between pairs of nodes is necessary to
make $p_i$ converge to $p_{{i}}^d$. The power flows between pairs
of nodes are determined by \eqref{eq76}-\eqref{eq80} and it is
depicted in Fig. \ref{fig8} (d). Next, we can obtain the net power flow at each node from
\eqref{eq41} as shown in Fig. \ref{fig8} (e). Then, the net power at
each node is given by \eqref{eq9} as shown in Fig. \ref{fig8} (f).
Then, the coordination error given by \eqref{eq42} is zero and
the supply-demand balance is also achieved as shown in Fig. \ref{fig8} (a) and Fig. \ref{fig8} (f).

\begin{figure}[t]
\centering \includegraphics [scale=0.5]{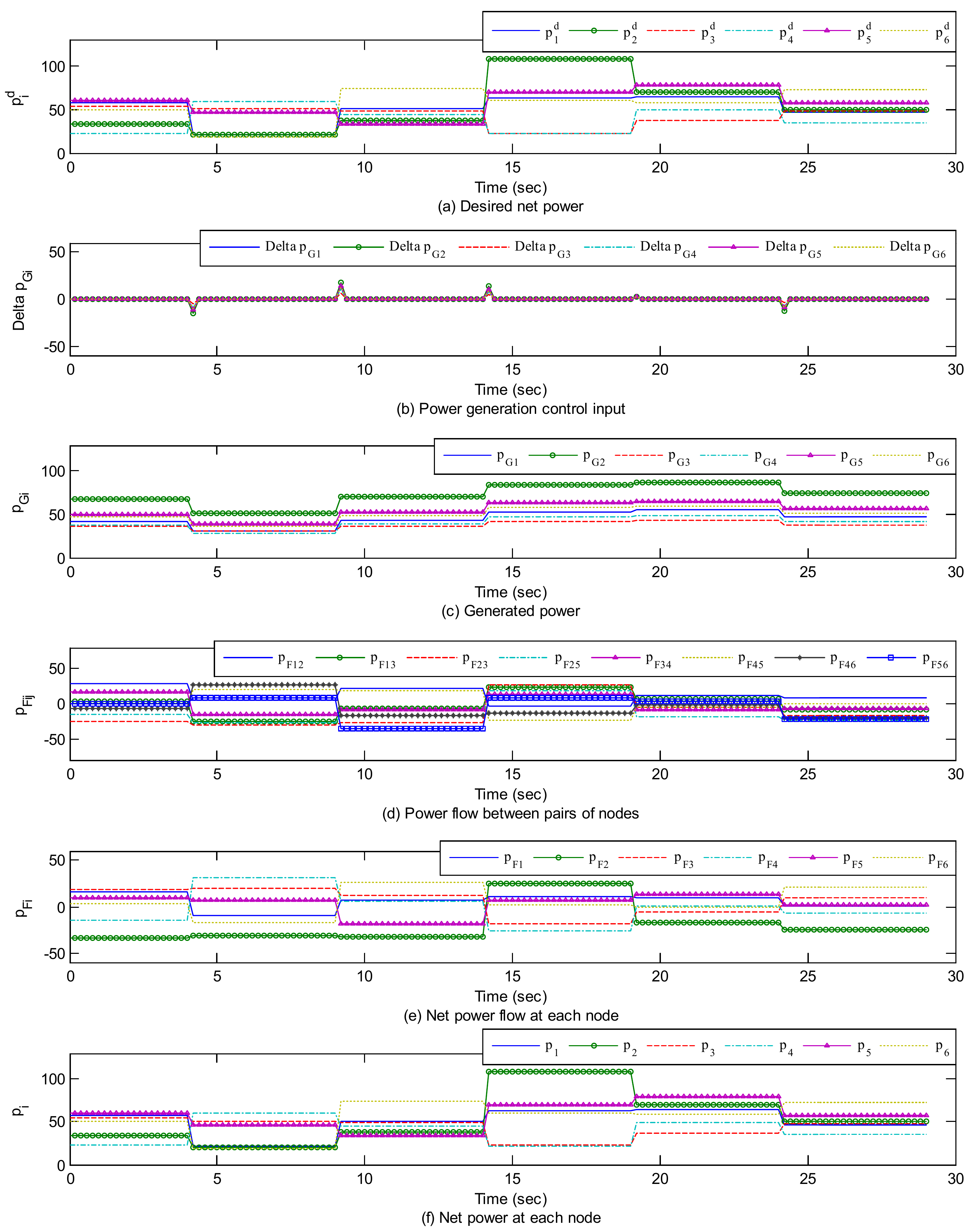}
\caption{Power distribution without power coordination} \label{fig8}
\end{figure}

%\begin{figure}[t]
%\centering \includegraphics [scale=0.34]{Simulation/fig6.pdf}
%\caption{Power geneartion control input and generated power for given demand
%without power coordination} \label{fig8}
%\end{figure}
%
%\begin{figure}[!]
%\centering \includegraphics [scale=0.34]{Simulation/fig7.pdf}
%\caption{Net power flows between pairs of nodes and net power at
%each node after power flow control} \label{fig9}
%\end{figure}

%\begin{figure}[!]
%\centering \includegraphics [scale=0.355]{Simulation/fig8.pdf}
%\caption{Coordination error and supply-demand balance} \label{fig10}
%\end{figure}

%\begin{figure}
%\begin{center}
%\includegraphics[width=0.175\textwidth]{fig_interaction_graph.eps}
%\end{center}
%\caption{Sensing graph for five-agents.}
%\label{fig_information_graph}
%\end{figure}

%\begin{figure}
%\begin{center}
%\subfloat[The formation $p$.]{\includegraphics[width=0.4\textwidth]{fig_single_formation.eps}}
%\end{center}
%\begin{center}
%\subfloat[The inter-agent distance errors $\tilde{d}_{ij}$ for all $(i,j) \in \mathcal{E}_{+}$.]{\includegraphics[width=0.4\textwidth]{fig_single_error.eps}}
%\end{center}
%\caption{Simulation result for five single-integrators.}
%\label{fig_sim_single}
%\end{figure}

%%%%%%%%%%%%%%%%%%%%%%%%%%%%%%%%%%%%%%%%%%%%%%%%%%%%%%%%%%%%%%%%%%%%%%%%%%%%%%%%
%%%%%%%%%%%%%%%%%%%%%%%%%%%%%%%%%%%%%%%%%%%%%%%%%%%%%%%%%%%%%%%%%%%%%%%%%%%%%%%%
%%%%%%%%%%%%%%%%%%%%%%%%%%%%%%%%%%%%%%%%%%%%%%%%%%%%%%%%%%%%%%%%%%%%%%%%%%%%%%%

\section{Conclusion} \label{sec_conc}
This paper has addressed power distribution problems in distributed
power grid with and without power coordination. First, a
power coordination using a consensus scheme under limited net power and power
generation capacities was considered. Second, power generation and a power flow control laws
with and without power coordination were
designed using a consensus scheme to achieve supply-demand balance.
%Specifically, for the power distribution with power coordination,
%only power generation control was designed locally,
%while for the power distribution without power coordination,
%power flow control between pairs of distributed power resources was
%also designed.

Since this paper has provided systematic approaches for power
distribution among distributed nodes on the basis of consensus
algorithms, the results of this paper can be nicely utilized in
power dispatch or power flow scheduling. The authors believe that consensus algorithm-based
power distribution schemes of this paper have several advantages
over typical power dispatch approaches. The first key advantage is
that the power coordination and control can be realized via
decentralized control scheme without relying upon nonlinear
optimization technique. The second advantage is that the
framework proposed in this paper can handle power constraint,
generation, and flow in a unified framework. 
 
%Finally, the
%framework of this paper was developed on the basis of a decoupling
%between the physical layer for actual power flow and the cyber layer
%for the computation of consensus algorithms. The authors
%believe, even though they are decoupled, that the framework of this paper is realistic since the
%computational capacity of cyber layer is so powerful, the
%convergence of consensus in the cyber layer can be achievable much
%faster than actual power flows in the physical layer. 
{It is noticeable that this paper has considered the power coordination,
power generation and power flow control in the higher-level models of grid networks
but does not consider lower-level models of grid networks such as current, voltage drop, and line impedance.
However, in our future research, 
it would be meaningful to add links between the higher-level models and the lower-level models.
Further, it is desirable to investigate various features such as behavior of self-interested customer, power loss in the transmission line, and structures between the physical and cyber layers such
as delays, and mismatches between them.}

\begin{remark}
Though the paper has only focused on power distribution, we believe that the proposed approach can be extended to various attribute distribution problems such as traffic control and supply chain management, because we use a fundamental model describing flow of attribute between pairs of distributed resources as well as the amount of attribute in each distributed resource. For example, in a traffic control, each freeway section named ``cell'' corresponds to each distributed power resource, distribution of vehicle density corresponds to net power of each power resource, desired traffic density corresponds to desired net power, and on-ramp traffic flow corresponds to power generation control input. Thus, the goal of traffic control which satisfies desired traffic density corresponds to that of power distribution which satisfies desired net power.
\end{remark}

%\begin{remark}
%Note that this paper considers only the above-mentioned power coordination problem,
%%which determines the amount of power needed in individual nodes and the amount of power
%%flow between nodes, 
%but does not consider 
%%the physical laws that rule the power flows

%%in the grid; i.e., in grid networks, power flows between nodes cannot be arbitrary but are a function of
%%voltage and must therefore satisfy Kirchhoff's laws. That is, in 
%lower-level models of grid networks such as current, voltage drop, and line impedance are needed to be considered. That is, in power coordination and control problems,
%%which are the topics of this paper, 
%since higher-level models of grid networks are studied, we only consider the amount of power generation and power flow between nodes.
%\end{remark}

\section{Acknowledgement}
It is recommended to see `{Byeong-Yeon Kim, ``Coordination and control for energy distribution using consensus algorithms in interconnected grid networks'', Ph.D. Dissertation, School of Information and Mechatronics, Gwangju Institute of Science and Technology, 2013}' for applications to various engineering problems of the algorithms developed in this paper.

%%%%%%%%%%%%%%%%%%%%%%%%%%%%%%%%%%%%%%%%%%%%%%%%%%%%%%%%%%%%%%%%%%%%%%%%%%%%%%%%

%\bibliographystyle{automatica}
%\bibliography{2012_automatica_power_distribution} 

\end{document}